\newcommand{\dd}{\mathrm{d}}
\newcommand{\id}{\operatorname{id}}
\newcommand{\vol}{\operatorname{vol}}
\newcommand{\Z}{\mathds Z}
\newcommand{\R}{\mathds R}
\newcommand{\C}{\mathds C}
\newcommand{\Hr}{\mathds H}
\newcommand{\Ca}{\mathds{C}\mathrm{a}}
\newcommand{\SO}{\mathsf{SO}}
\renewcommand{\O}{\mathsf O}
\newcommand{\SU}{\mathsf{SU}}
\newcommand{\U}{\mathsf{U}}
\newcommand{\Sp}{\mathsf{Sp}}
\newcommand{\Spin}{\mathsf{Spin}}
\newcommand{\G}{\mathsf{G}}
\newcommand{\K}{\mathsf{K}}
\renewcommand{\H}{\mathsf{H}}
\newcommand{\Sym}{\operatorname{S}(\Lambda^2 V)}
\renewcommand{\b}{\mathfrak b}
\newcommand{\g}{\mathrm g}
\newcommand{\tr}{\operatorname{tr}}
\newcommand{\Gr}{\operatorname{Gr}_2}
\newcommand{\m}{\mathfrak m}
\newcommand{\p}{\mathfrak p}
\newcommand{\h}{\mathfrak h}
\renewcommand{\k}{\mathfrak k}
\renewcommand{\Re}{\operatorname{Re}}
\newcommand{\diag}{\operatorname{diag}}
\newtheorem{theorem}{Theorem}[]
\newtheorem{lemma}[theorem]{Lemma}
\newtheorem{proposition}[theorem]{Proposition}
\newtheorem{mainthm}{\sc Theorem}
\newtheorem{problem}{\sc Problem}
\theoremstyle{definition}
\newtheorem{definition}[theorem]{Definition}
\theoremstyle{remark}
\newtheorem{remark}[theorem]{Remark}
\newtheorem{example}[theorem]{Example}
\title[Strongly positive curvature]{Strongly positive curvature}
\author[R. G. Bettiol]{Renato G. Bettiol}
\author[R. Mendes]{Ricardo A. E. Mendes}
\address{
\noindent University of Notre Dame \hfill\break\hfill\indent Department of Mathematics \hfill\break\hfill\indent 255 Hurley Building \hfill\break\hfill\indent Notre Dame, IN, 46556-4618, USA \hfill\break\hfill\indent \emph{E-mail address}: {\tt rbettiol@nd.edu} \hfill\break\hfill\indent \emph{E-mail address}: {\tt rmendes@nd.edu}
}
\numberwithin{equation}{section}
\numberwithin{theorem}{section}
\thanks{The first named author is partially supported by the NSF grant DMS-1209387, USA}
\subjclass[2010]{53B20, 53C20, 53C21, 53C30, 53C35}
\date{\today}
\begin{document}
\begin{abstract}
We begin a systematic study of a curvature condition (strongly positive curvature) which lies strictly between positive curvature operator and positive sectional curvature, and stems from the work of Thorpe~\cite{thorpeJDG}. We prove that this condition is preserved under Riemannian submersions and Cheeger deformations, and that most compact homogeneous spaces with positive sectional curvature satisfy it.
\end{abstract}

\maketitle

\section{Introduction}
Manifolds with positive curvature are a classic area of research in Riemannian Geometry. Nevertheless, very few topological obstructions to positive sectional curvature ($\sec>0$) are known, and many conjectures about this class remain elusive. In addition, the construction of new examples is a notoriously difficult problem: spheres and projective spaces remain the only known examples in dimensions $\geq25$. 
While there are many open questions about $\sec>0$, the subclass of manifolds with positive curvature operator ($R>0$) is now completely understood. In a ground-breaking paper, B\"ohm and Wilking~\cite{bw} used Ricci flow to prove that such manifolds also admit a metric with constant curvature, and are hence a finite quotient of a sphere. Given this current disparity, it is natural to investigate intermediate conditions between $R>0$ and $\sec>0$, with the hope of advancing the understanding of the latter. 

In this paper, we begin a systematic study of one such intermediate condition, which we call \emph{strongly positive curvature} following \cite{p2}. This condition stems from the work of Thorpe~\cite{singerthorpe,thorpeJDG,thorpe} and Zoltek~\cite{zoltek} in the 1970s, and has been implicitly studied by other authors, including P\"uttmann~\cite{puttmann}, who computed optimal pinching constants of homogeneous spaces, and  Grove, Verdiani and Ziller~\cite{p2} and Dearricott~\cite{de}, who recently verified the existence of a new closed $7$-manifold with $\sec>0$. Nevertheless, it is our feeling that manifolds with strongly positive curvature have not yet received the deserved attention \emph{by themselves}. The main goal of this paper is to rectify this by establishing the foundations for the study of this curvature condition and using them to analyze homogeneous examples, as well as indicating many problems to provide further directions that should be explored.

In order to define strongly positive curvature and state our results, recall that at each point of a Riemannian manifold $(M,\g)$, the \emph{curvature operator} is the self-adjoint linear operator $R\colon\Lambda^2 T_pM\to\Lambda^2 T_pM$ induced by the curvature tensor, see \eqref{eq:curvop}. Any $2$-plane $\sigma\subset T_pM$ can be viewed as a unit \emph{decomposable} element $X\wedge Y\in\Lambda^2 T_pM$, by choosing orthonormal vectors $X$ and $Y$ that span $\sigma$. As such, the \emph{sectional curvature} of planes tangent at $p\in M$ is computed by the quadratic form $\sec(\sigma)=\langle R(\sigma),\sigma\rangle$ on the subset of unit decomposable elements of $\Lambda^2 T_pM$. This subset is identified with the Grassmannian of (oriented) $2$-planes of $T_pM$.

Notice that any $4$-form $\omega$ naturally induces a self-adjoint operator $\omega\colon\Lambda^2 T_pM\to\Lambda^2T_pM$, given by $\langle\omega(\alpha),\beta\rangle=\langle\omega,\alpha\wedge\beta\rangle$, whose associated quadratic form clearly vanishes on decomposable elements $\sigma\in\Lambda^2T_pM$, since $\sigma\wedge\sigma=0$. The fundamental observation of Thorpe~\cite{thorpeJDG} is that sectional curvatures can be hence computed using the \emph{modified curvature operator} $R+\omega$; more precisely,
\begin{equation*}
\sec(\sigma)=\langle R(\sigma),\sigma\rangle=\langle (R+\omega)(\sigma),\sigma\rangle.
\end{equation*}
The crucial advantage is that the \emph{linear operator} $R+\omega$ is a much simpler object than the \emph{nonlinear function} $\sec$. In particular, if there exists $\omega$ such that $R+\omega$ is a positive-definite operator, then all planes automatically have positive sectional curvature. We say that the manifold $(M,\g)$ has \emph{strongly positive curvature} if this property is satisfied at every $p\in M$. Thus, it is clear from the definitions that
\begin{equation}\label{eq:implications}
R>0 \; \Longrightarrow \; \text{strongly positive curvature} \; \Longrightarrow \; \sec>0,
\end{equation}
and, as we shall see, none of the reverse implications hold in dimensions $\geq5$.

The first part of our paper regards basic properties of strongly positive curvature and operations that preserve it. Riemannian submersions $\pi\colon (\overline M,\overline \g)\to (M,\g)$ are a fundamental tool to construct manifolds with $\sec>0$, due to the fact that if $(\overline M,\overline \g)$ has $\sec>0$, then so does $(M,\g)$. A central result in our theory is that, although the analogous implication does not hold for $R>0$, it does hold for strongly positive curvature:

\begin{mainthm}\label{thm:A}
Let $\pi\colon (\overline M,\overline \g)\to (M,\g)$ be a Riemannian submersion. If $(\overline M,\overline \g)$ has strongly positive curvature, then $(M,\g)$ also has strongly positive curvature.
\end{mainthm}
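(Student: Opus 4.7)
The plan is to give an explicit construction of a positive-definite self-adjoint operator on $\Lambda^2 T_pM$ of the form $R + \omega$ at each $p \in M$, starting from the corresponding object upstairs and correcting it with a term built from the A-tensor of the submersion.

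First, I would establish a useful reformulation: strongly positive curvature at $p$ is equivalent to the existence of a positive-definite symmetric bilinear form $Q$ on $\Lambda^2 T_pM$ whose associated quadratic form agrees with $\sec$ on unit decomposable 2-vectors. The algebraic fact behind this is that the space of self-adjoint operators on $\Lambda^2 T_pM$ decomposes as (algebraic curvature operators) $\oplus$ (operators induced by 4-forms), and an algebraic curvature operator is determined by its sectional curvature function via polarization. Thus if $Q - R$ is self-adjoint and has vanishing sectional curvatures, its algebraic-curvature part must be zero, so $Q - R$ comes from a 4-form.

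Next, fix $p \in M$ and $\overline p \in \pi^{-1}(p)$, and let $H, V \subset T_{\overline p}\overline M$ be the horizontal and vertical subspaces. The restriction $d\pi|_H\colon H \to T_pM$ is a linear isometry, which induces an isometric identification of $\Lambda^2 T_pM$ with $\Lambda^2 H \subset \Lambda^2 T_{\overline p}\overline M$; I write $\alpha \mapsto \overline\alpha$ for its inverse. Since the A-tensor of a Riemannian submersion is tensorial and skew-symmetric in its two horizontal arguments, the assignment $X \wedge Y \mapsto A_X Y$ descends to a well-defined linear map $A\colon \Lambda^2 H \to V$. Given $\overline\omega \in \Lambda^4 T_{\overline p}\overline M$ with $\overline R + \overline\omega$ positive-definite, I would set
\[ Q(\alpha, \beta) := \big\langle (\overline R + \overline\omega)(\overline\alpha), \overline\beta \big\rangle + 3\, \langle A(\overline\alpha), A(\overline\beta)\rangle \]
as a symmetric bilinear form on $\Lambda^2 T_pM$.

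The verification is then twofold. Positivity is immediate: both summands are nonnegative on the diagonal, and the first is strictly positive for $\alpha \neq 0$ since horizontal lifting is injective and $\overline R + \overline\omega$ is positive-definite on all of $\Lambda^2 T_{\overline p}\overline M$. Agreement with $\sec_M$ on a decomposable $\alpha = X \wedge Y$ reduces to O'Neill's horizontal curvature equation $\sec_M(X, Y) = \sec_{\overline M}(\overline X, \overline Y) + 3|A_{\overline X}\overline Y|^2$, together with the fact that $\overline\omega$ vanishes on decomposables. The reformulation then yields a 4-form $\omega$ on $M$ with $R + \omega = Q > 0$, establishing strongly positive curvature at $p$. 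The main conceptual step is the initial algebraic reformulation; once it is in place, the $3|A|^2$ correction in the definition of $Q$ is essentially forced by O'Neill's formula.
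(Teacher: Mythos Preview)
Your argument is correct and closely parallels the paper's, but the packaging is genuinely a bit different. The paper works directly with the full Gray--O'Neill formula for the curvature \emph{operator}: it rewrites the cross terms $2\langle A_XY,A_ZW\rangle-\langle A_YZ,A_XW\rangle+\langle A_XZ,A_YW\rangle$ as $3\langle\alpha(X\wedge Y),Z\wedge W\rangle-3\b(\alpha)(X,Y,Z,W)$ with $\alpha=A^*A$, and then reads off the explicit $4$-form $\omega=i^*\overline\omega+3\b(\alpha)$ making $R+\omega$ positive-definite. Your route replaces this tensor manipulation by the algebraic lemma ``a self-adjoint operator on $\Lambda^2$ with the correct sectional curvatures differs from $R$ by a $4$-form,'' so that you only need O'Neill's \emph{sectional} curvature formula $\sec_M=\sec_{\overline M}+3\lvert A\rvert^2$ rather than the full operator identity. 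Your approach is cleaner conceptually and avoids the explicit rearrangement; the paper's approach has the advantage of producing the $4$-form $3\b(\alpha)$ explicitly, which it then reuses repeatedly (normal homogeneous spaces, Cheeger deformations, the Strong Wallach Theorem). Both are short and essentially equivalent once one observes that your $Q-R$ equals precisely the paper's $i^*\overline\omega+3\b(\alpha)$.
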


It should come as no surprise that the proof of this result uses the Gray-O'Neill formula for curvature operators. The key observation is that a rearrangement of this formula in terms of the Bianchi identity yields a much more transparent presentation (see Subsection~\ref{subsec:submersions} for details):
\begin{align*}
\langle R(X\wedge Y),Z\wedge W\rangle &=\langle \overline R(\overline X\wedge\overline Y),\overline Z\wedge\overline W\rangle + 3\langle\alpha(\overline X\wedge\overline Y),\overline Z\wedge\overline W\rangle \nonumber\\
&\quad -3\b(\alpha)(\overline X,\overline Y,\overline Z,\overline W).
\end{align*}
Here, $\alpha=A^*A$ is a positive-semidefinite operator obtained from the tensor $A$ of the submersion, and $\b(\alpha)$ is the component of $\alpha$ orthogonal to tensors that satisfy the Bianchi identity. It turns out that such component is precisely the projection of $\alpha$ onto the subspace of operators induced by $4$-forms.

This technique is at the core of many of our results. For instance, it allows us to prove that strongly positive curvature is also preserved under Cheeger deformations:

\begin{mainthm}\label{mainthm:cheeger}
Suppose that $(M,\g)$ has strongly positive curvature and an isometric action of a compact Lie group $\G$. Then, the corresponding Cheeger deformation $(M,\g_t)$ also has strongly positive curvature for all $t>0$.
\end{mainthm}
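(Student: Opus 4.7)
The plan is to mirror the proof of Theorem~\ref{thm:A}. I would realize $(M, \g_t)$ as the base of the Riemannian submersion
$$\pi\colon \bigl(M \times \G,\; \g \oplus \tfrac{1}{t} Q\bigr) \longrightarrow (M, \g_t), \qquad \pi(p, h) = h \cdot p,$$
where $Q$ is a bi-invariant metric on $\G$; the fibers are the orbits of the isometric $\G$-action $g \cdot (p, h) = (g \cdot p, h g^{-1})$. One cannot apply Theorem~\ref{thm:A} directly, because the product metric on $M \times \G$ has zero sectional curvature on every plane mixing the two factors, and so does not have strongly positive curvature. Instead, I would apply the Bianchi-rearranged Gray--O'Neill formula from the proof of Theorem~\ref{thm:A}, which identifies the curvature operator $R^{\g_t}$---pulled back to $\Lambda^2 T_p M$ via horizontal lift---with $\overline R^{\mathcal H} + 3\alpha - 3\b(\alpha)$, where $\overline R^{\mathcal H}$ denotes $\overline R$ restricted to horizontal $2$-vectors.

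Given a 4-form $\omega$ on $M$ witnessing strong positivity of $(M, \g)$, I would set $\omega_t := \omega + 3\b(\alpha)$. This is again a 4-form, by the identification of $\b(\alpha)$ with a $4$-form noted in the introduction, and its purpose is to cancel the Bianchi correction:
$$R^{\g_t} + \omega_t \;=\; \overline R^{\mathcal H} + 3\alpha + \omega,$$
with $\omega$ extended by zero along the $\mathfrak g$-factor and then pulled back via horizontal lift. Since $3\alpha = 3 A^*A \geq 0$, it suffices to show that $\overline R^{\mathcal H} + \omega > 0$ as a bilinear form on $\Lambda^2 T_p M$.

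To verify this, compute horizontal lifts explicitly. Writing $\phi\colon \mathfrak g \to T_p M$ for the action map and $\phi^*$ for its $(\g, Q)$-adjoint, the horizontal lift of $Y \in T_p M$ at $(p, e)$ is $(P_t Y,\; t\, \phi^* P_t Y)$, where $P_t := (1 + t\,\phi \phi^*)^{-1}$ is the Cheeger tensor. The lift of $Y_1 \wedge Y_2$ decomposes along $\Lambda^2(T_p M \oplus \mathfrak g) = \Lambda^2 T_p M \oplus (T_p M \otimes \mathfrak g) \oplus \Lambda^2 \mathfrak g$ as $P_t Y_1 \wedge P_t Y_2$ plus a mixed term plus $t^2\, \phi^* P_t Y_1 \wedge \phi^* P_t Y_2$. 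The product $\overline R$ is block-diagonal and vanishes on the mixed summand, while the extended $\omega$ vanishes outside $\Lambda^2 T_p M$; evaluating $\overline R + \omega$ on these lifts therefore reduces to $(R + \omega)(P_t Y_1 \wedge P_t Y_2,\, P_t Y_3 \wedge P_t Y_4)$ plus a nonnegative $R_\G$-term. The first term is a positive-definite bilinear form in $(Y_1 \wedge Y_2,\, Y_3 \wedge Y_4)$ since $R + \omega > 0$ and $\Lambda^2 P_t$ is invertible, and the second is nonnegative because a bi-invariant metric has nonnegative curvature operator. The sum is therefore positive-definite, giving strong positivity of $\g_t$.

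The main obstacle will be the algebraic bookkeeping: confirming that the horizontal lift yields an isometric isomorphism $(\Lambda^2 T_p M, \g_t) \cong (\Lambda^2 \mathcal H_{(p, e)}, \overline \g)$, that the block-diagonal structure of $\overline R$ combined with the zero-extension of $\omega$ produces no off-diagonal cross-terms, and that the scaling $Q \mapsto Q/t$ preserves the sign of the $R_\G$-contribution. Once these are settled, strong positivity of $(M, \g_t)$ follows from the hypothesis on $(M, \g)$ together with the nonnegativity of $R_\G$.
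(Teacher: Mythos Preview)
Your proposal is correct and follows essentially the same route as the paper: realize $\g_t$ as the base of the submersion from $(M\times\G,\g\oplus\tfrac1tQ)$, apply the Bianchi-rearranged Gray--O'Neill formula, absorb $3\b(\alpha)$ into the modifying $4$-form, and observe that the remaining terms split as a positive-definite $(R+\omega)$-piece on $\Lambda^2 T_pM$ plus a nonnegative bi-invariant curvature piece on $\Lambda^2\mathfrak g$. The only cosmetic difference is that the paper uses the Cheeger reparametrization $C_t^{-1}$ so that the $T_pM$-component of the horizontal lift of $C_t^{-1}X$ is exactly $X$, which makes the $(R+\omega)$-term appear directly as $\langle(R+\omega)(X\wedge Y),Z\wedge W\rangle$; you instead keep the variable $Y$ and carry the invertible factor $\Lambda^2 P_t$ through, which is equivalent.
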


The second part of our paper uses the above tools to study which manifolds with $\sec>0$ also admit strongly positive curvature. It follows from Theorem~\ref{thm:A} that all manifolds admitting a Riemannian submersion from a round sphere $S^n\to M$, such as projective spaces, have strongly positive curvature.\footnote{More precisely, a $4$-form proportional to the square of the K\"ahler form can be used to modify the curvature operator of $\C P^n$ to become positive-definite (analogously for $\Hr P^n$), see Remark~\ref{rem:kahlerform}.} The projective spaces $\C P^n$ and $\Hr P^n$, $n\geq2$, are hence examples of manifolds with strongly positive curvature that do not have positive curvature operator, showing that the converse to the first implication in \eqref{eq:implications} does not hold. A counter-example to the converse of the second implication in \eqref{eq:implications} is given by the Cayley plane $\Ca P^2$, which is the only compact rank one symmetric space (CROSS) that does not admit a submersion from a sphere. In fact, $\Ca P^2$ does not admit any homogeneous metrics with strongly positive curvature by a Hodge theory argument (Proposition~\ref{prop:cross2}). Although \emph{algebraic} counter-examples were previously known; to our knowledge, we provide the first examples on closed manifolds (see Remark~\ref{rem:zoltek}). Besides $\Ca P^2$, also the Berger space $B^{13}$ and certain Berger metrics on $S^{4n+3}$ have $\sec>0$ but do not have strongly positive curvature, see Remark~\ref{rem:b13} and Appendix~\ref{sec:berger}.

The first examples of closed manifolds with $\sec>0$ different from a CROSS were found among homogeneous spaces. The complete classification of closed simply-connected homogeneous spaces that admit an invariant metric with $\sec>0$ (see Theorem~\ref{thm:homsp}) was obtained in even dimensions by Wallach~\cite{wa} and in odd dimensions by B\'erard-Bergery~\cite{bb}. A remarkable feature of these examples is that, with only one exception,\footnote{The Berger space $B^7=\SO(5)/\SO(3)$, see Subsection~\ref{subsec:b7}.} they are the total space of a \emph{homogeneous fibration} $\K/\H\to\G/\H\to\G/\K$, where $\H\subset\K\subset\G$ are compact Lie groups. Sufficient conditions to produce an invariant metric with $\sec>0$ on $\G/\H$ were given by Wallach~\cite{wa} (see also \cite{es} and \cite[Prop. 4.3]{zillersurvey}), among which is \emph{fatness} of the bundle $\K/\H\to\G/\H\to\G/\K$.
We find analogous conditions (including a notion of \emph{strong fatness}, see Subsection~\ref{subsec:strongfat}), which are sufficient to produce invariant metrics with \emph{strongly} positive curvature on $\G/\H$ (see Theorem~\ref{thm:wallach}). Combining this with the above mentioned classification of homogeneous spaces with $\sec>0$ (Theorem~\ref{thm:homsp}), we obtain an essentially complete classification of closed simply-connected homogeneous spaces that admit an invariant metric with strongly positive curvature:

\begin{mainthm}\label{mainthm:homspaces}
The following homogeneous spaces admit an invariant metric with strongly positive curvature:
\begin{itemize}
\item Compact rank one symmetric spaces $S^n$, $\C P^n$ and $\Hr P^n$;
\item Wallach flag manifolds $W^6=\SU(3)/\mathsf T^2$ and $W^{12}=\Sp(3)/\Sp(1)\Sp(1)\Sp(1)$;
\item Aloff-Wallach spaces $W^7_{k,\ell}=\SU(3)/\mathsf S^1_{k,\ell}$;
\item Berger spaces $B^7=\SO(5)/\SO(3)$ and $B^{13}=\SU(5)/\Sp(2)\cdot\mathsf S^1$.
\end{itemize}
Moreover, the Cayley plane $\Ca P^2=\mathsf F_4/\Spin(9)$ does not admit any homogeneous metric with strongly positive curvature.
\end{mainthm}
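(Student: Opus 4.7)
The plan is to argue case by case, leaning on the tools already established. For the compact rank one symmetric spaces, the round sphere $S^n$ has $R>0$ and so has strongly positive curvature by \eqref{eq:implications}. For $\C P^n$ and $\Hr P^n$, I would apply Theorem~\ref{thm:A} to the Hopf submersions $S^{2n+1}\to \C P^n$ and $S^{4n+3}\to \Hr P^n$, transporting strongly positive curvature downstairs. (As indicated in Remark~\ref{rem:kahlerform}, one can alternatively exhibit explicit modifying $4$-forms proportional to powers of the K\"ahler form.)

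For the remaining four families of positive examples, the strategy is to realize each as the total space of a homogeneous fibration $\K/\H\to\G/\H\to\G/\K$ over a lower-dimensional base, and then invoke Theorem~\ref{thm:wallach}, whose sufficient conditions for the existence of an invariant metric with strongly positive curvature on $\G/\H$ include \emph{strong fatness} of the bundle in the sense of Subsection~\ref{subsec:strongfat}. Concretely, $W^6=\SU(3)/\mathsf T^2$ and $W^{12}=\Sp(3)/\Sp(1)\Sp(1)\Sp(1)$ fiber over $\C P^2$ and $\Hr P^2$ with $S^2$-fibers, the Aloff--Wallach spaces $W^7_{k,\ell}=\SU(3)/\mathsf S^1_{k,\ell}$ fiber over flag manifolds with $\mathsf S^1$-fibers, and the Berger space $B^{13}=\SU(5)/\Sp(2)\cdot\mathsf S^1$ fibers over $\SU(5)/\Sp(2)\cdot\U(1)$. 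For each of these, the work reduces to a Lie-algebraic verification of strong fatness with respect to a suitable Cheeger-type canonical variation of the submersion metric, after which Theorem~\ref{thm:wallach} (combined with Theorem~\ref{mainthm:cheeger} if needed) delivers the desired invariant metric.

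The Berger space $B^7=\SO(5)/\SO(3)$ does not fit the homogeneous-fibration framework and must be treated directly. Here I would fix the standard normal homogeneous metric on $B^7$, identify the $7$-dimensional isotropy representation $\m$ of $\SO(3)$, enumerate the $\operatorname{Ad}(\SO(3))$-invariant $4$-forms on $\m$, and then find an explicit combination $\omega$ for which $R+\omega$ is positive-definite on $\Lambda^2\m$. This is a finite but delicate linear-algebra computation on a $21$-dimensional space. For the Cayley plane $\Ca P^2=\mathsf F_4/\Spin(9)$, the negative statement follows from Proposition~\ref{prop:cross2}: a representation-theoretic/Hodge-theoretic analysis of $\Spin(9)$-invariant $4$-forms on the $16$-dimensional isotropy representation shows that none can modify the symmetric curvature operator of $\Ca P^2$ into a positive-definite one.

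The main obstacle is the case-by-case verification of strong fatness for the four homogeneous fibrations, and the independent explicit construction of the modifying $4$-form on $B^7$. The Aloff--Wallach family is expected to be the most delicate, because the two integer parameters $k,\ell$ affect both the isotropy representation and the available tensors, so strong fatness must be checked uniformly in $(k,\ell)$; the Berger $B^7$ case demands a concrete algebraic witness, whereas the CROSS and Wallach/Berger-$B^{13}$ cases follow once the relevant bundle structures are set up.
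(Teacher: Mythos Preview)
Your overall architecture matches the paper's: CROSS via Hopf submersions and Theorem~\ref{thm:A}, $\Ca P^2$ via Proposition~\ref{prop:cross2}, the remaining families via the Strong Wallach Theorem~\ref{thm:wallach}, and $B^7$ by direct computation. However, several of the homogeneous fibrations you propose do not satisfy the hypotheses of Theorem~\ref{thm:wallach}, so the plan as written has genuine gaps.

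For $W^7_{k,\ell}$ you propose the circle fibration over the flag manifold $W^6$. But hypothesis~(i) of Theorem~\ref{thm:wallach} requires the base $\G/\K$ to be a CROSS and $(\mathfrak g,\k)$ to be a symmetric pair; $W^6=\SU(3)/\mathsf T^2$ is neither, so the theorem does not apply. The paper instead uses the intermediate subgroup $\K=\U(2)$, yielding the fibration $S^3/\Z_{k+\ell}\to W^7_{k,\ell}\to\C P^2$, whose base is a CROSS. Similarly, for $B^{13}$ you write the base as $\SU(5)/\Sp(2)\cdot\U(1)$, but $\Sp(2)\cdot\U(1)=\Sp(2)\cdot\mathsf S^1$ is already $\H$, so this is not a nontrivial fibration; the correct intermediate subgroup is $\K=\U(4)$, giving $\R P^5\to B^{13}\to\C P^4$. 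Finally, the fiber of $W^{12}\to\Hr P^2$ is $\Hr P^1\cong S^4$, not $S^2$; this matters for checking hypothesis~(ii), which is met here because $(\k,\h)$ is a symmetric pair rather than because $\dim\K/\H\leq3$.

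Once the correct fibrations are in place, the paper's verification of strong fatness is exactly the Lie-algebraic computation you describe: compute $\ker F$ explicitly and exhibit an $\H$-invariant $\tau\in\Lambda^2\m\otimes\Lambda^2\p$ that is positive-definite on it, then apply Lemma~\ref{lemma:firstorder}. Your expectation that the Aloff--Wallach family is the most delicate is borne out: the paper's $\tau_{a,b}$ depends on two parameters constrained by inequalities involving $r=k/\ell$.
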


Some of the above cases, namely $W^6$, $W^7_{1,1}$ and $B^7$, were already known to P\"uttmann~\cite{puttmann,puttmann2}.
The \emph{only} remaining homogeneous space that could support strongly positive curvature is the Wallach flag manifold $W^{24}=\mathsf F_4/\Spin(8)$, which is the total space of a homogeneous fibration $S^8\to W^{24}\to\Ca P^2$. From Theorem~\ref{thm:A}, any metric on $W^{24}$ for which the above is a Riemannian submersion does not have strongly positive curvature. Nevertheless, there are $\mathsf F_4$-invariant metrics on $W^{24}$ with $\sec>0$ for which this does not hold. One such metric could, in principle, have strongly positive curvature.

Besides the above homogeneous manifolds, the only other known examples\footnote{Apart from the proposed construction of a metric with $\sec>0$ on the Gromoll-Meyer exotic sphere, by Petersen and Wilhelm \cite{pw}.} of closed manifolds with $\sec>0$ are biquotients (\emph{Eschenburg} and \emph{Bazaikin spaces}) and the exotic $T_1 S^4$ studied in \cite{de,p2}. The latter was shown to have strongly positive curvature in \cite{p2}, while the former are not yet known to have this property.

Several natural questions as the above arise from our study of manifolds with strongly positive curvature. In Section~\ref{sec:openq}, we compile a list of such problems, including a few remarks about each of them. For instance, regarding attempts to find topological obstructions to strongly positive curvature, we observe that strongly positive curvature is \emph{not preserved under Ricci flow} and \emph{does not imply positivity of the Gauss-Bonnet integrand}.

This paper is organized as follows. In Section~\ref{sec:basics}, we study the basic properties of strongly positive curvature, including the proofs of Theorems~\ref{thm:A} and \ref{mainthm:cheeger}. In Section~\ref{sec:cross}, we analyze which of the CROSS have strongly positive curvature. Section~\ref{sec:strongfatwallach} introduces the notion of strong fatness for homogeneous fibrations and gives sufficient conditions for the existence of an invariant metric with strongly positive curvature on its total space. In Section~\ref{sec:homsp}, we complete the proof of Theorem~\ref{mainthm:homspaces} by verifying such conditions on the examples of homogeneous fibrations and directly computing the modified curvature operator of the exceptional case $B^7$. Section~\ref{sec:openq} contains a list of open problems related to strongly positive curvature, accompanied by a few remarks. Finally, Appendix~\ref{sec:berger} regards Berger metrics on spheres.

\bigskip
\noindent\textbf{Acknowledgements.} It is a pleasure to thank Karsten Grove, Thomas P\"uttmann, Luigi Verdiani and Wolfgang Ziller for their constant interest in this project and many valuable suggestions. We also thank Amy Buchmann and David Johnson for helpful conversations on related subjects.

\section{Definitions and Basic Properties}
\label{sec:basics}

In this section, we introduce the definition of strongly positive curvature, and study its basic properties. In order to do so, we recall some facts about curvature operators from the point of view of Thorpe~\cite{singerthorpe,thorpeJDG,thorpe}. Throughout the text, we follow the sign conventions in Besse~\cite{besse}; in particular, the curvature operator of a Riemannian manifold is given by:
\begin{equation}\label{eq:curvop}
\langle R(X\wedge Y),Z\wedge W\rangle =\langle R(X,Y)Z,W\rangle = \langle \nabla_{[X,Y]}Z-\nabla_X\nabla_Y Z+\nabla_Y\nabla_X Z,W\rangle.
\end{equation}

Since almost all known constructions of manifolds with $\sec>0$ use a combination of submersions and Cheeger deformations \cite{zillersurvey}, it is natural to expect that these also play a central role in constructing manifolds with strongly positive curvature. This turns out to be exactly the case, as explained in Theorems~\ref{thm:submersions} and \ref{thm:cheegerdef} (Theorems~\ref{thm:A} and \ref{mainthm:cheeger}), which play a central role in the remainder of this paper. We remark that Propositions~\ref{prop:immersions} and \ref{prop:ginvariance} were also observed by P\"uttmann~\cite{puttmann}.

\subsection{Preliminaries}
Let $V$ be a finite-dimensional vector space with an inner product $\langle\cdot,\cdot\rangle$, and $\Lambda^k V$ its $k^{th}$ exterior power. By means of the induced inner product, we identify $\Lambda^k V$ with the space $\Lambda^k V^*$ of $k$-forms on $V$. Denote by $\Sym$ the space of linear self-adjoint operators $R\colon\Lambda^2 V\to\Lambda^2 V$, with the inner product $\langle R,S\rangle=\tr RS$. Any $\omega\in\Lambda^4 V$ can be viewed as an operator $\omega\in\Sym$ via
\begin{equation}\label{eq:wrw}
\langle \omega(X\wedge Y),Z\wedge W\rangle:=\omega(X,Y,Z,W).
\end{equation}
This gives an isometric immersion $\Lambda^4 V \subset\Sym$. The orthogonal projection $\b\colon\Sym\to\Lambda^4V$ onto this subspace is given by the \emph{Bianchi map}:
\begin{equation*}
\b(R)(X,Y,Z,W)=\tfrac13\Big(\langle R(X\wedge Y),Z\wedge W\rangle + \langle R(Y\wedge Z),X\wedge W\rangle + \langle R(Z\wedge X),Y\wedge W\rangle\Big).
\end{equation*}
In particular, $\Sym=\ker\b\oplus\Lambda^4 V$ is an orthogonal direct sum decomposition.\footnote{Moreover, this is a decomposition of $\O(n)$-representations, since the Bianchi map is equivariant with respect to the natural $\O(n)$-actions. The $\O(n)$-representation on $\Lambda^4 V$ is irreducible, while the $\O(n)$-representation on $\ker\b$ splits as sum of three irreducible subrepresentations, that are related to the Ricci and Weyl tensors, see \cite[p. 357]{singerthorpe}.} The elements $R\in\ker\b$ are called \emph{algebraic curvature operators}, since they are self-adjoint operators $R\colon\Lambda^2 V\to\Lambda^2 V$ that satisfy the first Bianchi identity, as the curvature operator of a Riemannian manifold.

The Grassmannian of (oriented) $2$-planes in $V$ can be seen as the subset $\Gr(V)\subset\Lambda^2V$ of decomposable $2$-vectors with unit norm, by identifying $\sigma=X\wedge Y\in \Lambda^2V$ with the $2$-plane spanned by $X$ and $Y$. Given $R\in\Sym$, the restriction of the associated quadratic form to $\Gr(V)$ is called its \emph{sectional curvature function}:
\begin{equation}\label{eq:secr}
\sec_R\colon\Gr(V)\longrightarrow\R, \quad \sec_R(\sigma):=\langle R(\sigma),\sigma\rangle.
\end{equation}
It is easy to see that $\sigma\in\Lambda^2 V$ is decomposable if and only if $\langle \omega(\sigma),\sigma\rangle=0$ for all $\omega\in\Lambda^4 V$, see \cite[Prop 1.3]{thorpeJDG}. As a result, \eqref{eq:secr} only depends on the component of $R$ in $\ker\b$. More precisely, given any $\omega\in\Lambda^4V$ and $\sigma\in\Gr(V)$, we have
\begin{equation}\label{eq:secrw}
\sec_{R+\omega}(\sigma)=\langle (R+\omega)(\sigma),\sigma\rangle=\langle R(\sigma),\sigma\rangle+\langle \omega(\sigma),\sigma\rangle=\sec_R(\sigma).
\end{equation}
Given an algebraic curvature operator $R$, although the \emph{modified curvature operator} $R+\omega$ no longer satisfies the first Bianchi identity, both have the same sectional curvature function.

\begin{definition}[Strongly positive curvature]\label{def:strongpos}
An algebraic curvature operator $R\colon\Lambda^2 V\to\Lambda^2 V$ has \emph{strongly positive curvature} if there exists $\omega\in\Lambda^4 V$ such that $(R+\omega)\colon\Lambda^2 V\to\Lambda^2 V$ is a positive-definite operator. A Riemannian manifold $(M,\g)$ has \emph{strongly positive curvature} if the curvature operator $R_p\colon\Lambda^2 T_pM\to\Lambda^2 T_pM$ has strongly positive curvature for all $p\in M$.
\end{definition}

In particular, it follows from \eqref{eq:secrw} that if $(M,\g)$ has strongly positive curvature, then it also has $\sec>0$, i.e., the sectional curvature of any $2$-plane tangent to $(M,\g)$ is positive. It is also clear that if $R>0$, i.e., the curvature operator of $(M,\g)$ is positive-definite, then $(M,\g)$ has strongly positive curvature. Moreover, it was known since Zoltek~\cite{zoltek}, see also \cite{thorpeJDG}, that there exist algebraic curvature operators in dimensions $\geq5$ that have $\sec_R>0$ but cannot be modified with any $4$-form to become positive-definite, cf. Remark~\ref{rem:zoltek}. In the sequel, we provide many examples in dimension $\geq4$ of closed manifolds with strongly positive curvature and not diffeomorphic to spheres, which hence cannot have $R>0$ by \cite{bw}. Thus, strongly positive curvature is truly an \emph{intermediate} curvature condition between $\sec>0$ and $R>0$, in dimensions $\geq5$. However, in dimensions $\leq4$, Thorpe~\cite{thorpe} proved the following converse result, see also \cite[Prop 3.4]{puttmann}.

\begin{proposition}\label{prop:dim4}
If $\dim V\leq4$, an algebraic curvature operator $R$ has strongly positive curvature if and only if $\sec_R>0$.
\end{proposition}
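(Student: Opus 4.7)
\emph{Proof plan.} The implication ``strongly positive curvature $\Rightarrow \sec_R>0$'' is immediate from~\eqref{eq:secrw}, so only the converse needs work, and I would split it according to $\dim V \leq 3$ versus $\dim V = 4$. When $\dim V \leq 3$ one has $\Lambda^4 V = 0$, so strongly positive curvature reduces to $R>0$; at the same time, the relation $\sigma\wedge\sigma\in\Lambda^4 V = 0$ holds automatically in this range, so every element of $\Lambda^2 V$ is already decomposable and $\sec_R>0$ is itself the condition $R>0$.

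The substantive case is $\dim V = 4$, where $\Lambda^4 V = \R\vol$ is one-dimensional. Viewed as a self-adjoint operator on $\Lambda^2 V$ via~\eqref{eq:wrw}, the unit volume form $\vol$ is the Hodge star, acting as $\pm\id$ on the decomposition $\Lambda^2 V = \Lambda^2_+ V \oplus \Lambda^2_- V$. The key algebraic fact I would exploit, which is special to this dimension, is that writing $\sigma = \sigma_+ + \sigma_-$ one has $\sigma\wedge\sigma = (|\sigma_+|^2 - |\sigma_-|^2)\vol$, so a unit $\sigma \in \Lambda^2 V$ is decomposable if and only if $\langle\vol(\sigma),\sigma\rangle = 0$. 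With this observation in hand, I would study the concave function
\[
f(t) := \lambda_{\min}(R + t\vol) = \min_{|\sigma|=1}\langle(R+t\vol)(\sigma),\sigma\rangle,
\]
which, since $\vol$ has eigenvalues of both signs, satisfies $f(t)\to -\infty$ as $|t|\to\infty$ and hence attains its maximum at some $t^*\in\R$. The problem would then reduce to showing $f(t^*)>0$, since this immediately yields strongly positive curvature with $\omega = t^*\vol$.

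The plan for the final step is to produce a \emph{decomposable} unit $\sigma^*$ in the minimum-eigenspace $M(t^*)$ of $R + t^*\vol$: granting this, $f(t^*) = \langle R(\sigma^*),\sigma^*\rangle = \sec_R(\sigma^*) > 0$ by hypothesis, completing the proof. Existence of $\sigma^*$ is a short convex-analysis exercise built on the optimality of $t^*$: the one-sided derivatives of the concave function $f$ at $t^*$ equal the min and max of $\sigma \mapsto \langle\vol(\sigma),\sigma\rangle$ over $M(t^*)$, and these must bracket $0$ at a maximum; a brief continuity/connectedness argument on the minimal eigenspace (or a direct check when it is one-dimensional) then locates a zero of this continuous function on $M(t^*)$, giving a decomposable minimizer. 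The only nontrivial conceptual input is the dimension-$4$ decomposability criterion; notably, the Bianchi identity on $R$ plays no role, in agreement with the fact that $\sec_R$ depends on $R$ only modulo $\Lambda^4 V$.
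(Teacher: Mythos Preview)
Your argument is correct and is essentially Thorpe's original proof, which is precisely what the paper defers to (the paper states the proposition and cites \cite{thorpe} and \cite[Prop.~3.4]{puttmann} rather than proving it). Your maximization of $t\mapsto\lambda_{\min}(R+t\vol)$ together with the directional-derivative/intermediate-value step to locate a decomposable minimizer in the minimal eigenspace is exactly the mechanism Thorpe uses; the paper adds nothing beyond the citation, so there is no alternative approach to compare against.
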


\begin{remark}\label{rem:smoothness}
From Definition~\ref{def:strongpos}, strongly positive curvature implies\footnote{Note that, for each strongly positive algebraic curvature operator $R$, the set of $\omega$'s such that $R+\omega$ is positive-definite is bounded and convex. Its center of mass provides a continuous map $R\mapsto \omega_R$ such that $R+\omega_R$ is positive-definite, by the Dominated Convergence Theorem.}
the existence of a \emph{continuous} $4$-form $\omega\in\Omega^4(M)$ such that $R_p+\omega_p$ is positive-definite for all $p\in M$. A priori, such a $4$-form does not have to be \emph{smooth}, but it can be easily seen (from the openness of the positivity condition) that it admits a small perturbation $\omega'$ which is smooth and such that $R_p+\omega'_p$ is positive-definite for all $p\in M$.
\end{remark}

\subsection{Submersions}\label{subsec:submersions}
A fundamental result is that strongly positive curvature, similarly to $\sec>0$, is preserved under Riemannian submersions (Theorem~\ref{thm:A}):

\begin{theorem}\label{thm:submersions}
Let $\pi\colon (\overline M,\overline \g)\to (M,\g)$ be a Riemannian submersion. If $(\overline M,\overline \g)$ has strongly positive curvature, then $(M,\g)$ also has strongly positive curvature.
\end{theorem}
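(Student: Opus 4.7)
The plan is to implement the algebraic strategy sketched after Theorem A in the introduction. Fix $p\in M$ and any $\bar p\in\pi^{-1}(p)$, and let $\mathcal H\subset T_{\bar p}\overline M$ denote the horizontal subspace. The differential $d\pi|_{\mathcal H}\colon\mathcal H\to T_pM$ is a linear isometry, inducing isometric identifications $\Lambda^k T_pM\cong\Lambda^k\mathcal H\subset\Lambda^kT_{\bar p}\overline M$, through which I will view all relevant objects below.

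The first and conceptually central step is to derive the rearranged Gray--O'Neill identity
$$\langle R(X\wedge Y),Z\wedge W\rangle = \langle \overline R(\overline X\wedge\overline Y),\overline Z\wedge\overline W\rangle + 3\langle\alpha(\overline X\wedge\overline Y),\overline Z\wedge\overline W\rangle - 3\b(\alpha)(\overline X,\overline Y,\overline Z,\overline W),$$
where $\alpha=A^*A\geq 0$ is the positive-semidefinite operator on $\Lambda^2\mathcal H$ built from the $A$-tensor of the submersion. Starting from the classical Gray--O'Neill expression for $\langle R(X\wedge Y),Z\wedge W\rangle$ in terms of $\overline R$ and three $A$-tensor summands, I would gather two of these into a single $3\langle\alpha(\overline X\wedge\overline Y),\overline Z\wedge\overline W\rangle$ up to a cyclic correction; applying the first Bianchi identity to $\overline R$ then isolates the symmetric discrepancy as exactly $-3\b(\alpha)$, using that the Bianchi projection extracts precisely the $\Lambda^4$-component of an operator in $\Sym$. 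This step is purely algebraic, but the bookkeeping of signs and numerical factors is the delicate part.

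With this identity in hand, the theorem follows quickly. By hypothesis, there exists $\bar\omega\in\Lambda^4 T_{\bar p}\overline M$ with $\overline R+\bar\omega$ positive-definite. Since the quadratic form induced by any $4$-form on $\sigma\in\Lambda^2\mathcal H$ depends only on its component in $\Lambda^4\mathcal H$ (because $\sigma\wedge\sigma\in\Lambda^4\mathcal H$), I may replace $\bar\omega$ with its horizontal projection $\bar\omega^h\in\Lambda^4\mathcal H$ without altering positivity on horizontal $2$-vectors. Now set
$$\omega:=\bar\omega^h+3\b(\alpha)\in\Lambda^4\mathcal H\cong\Lambda^4 T_pM.$$
Substituting into the rearranged formula, the $\pm 3\b(\alpha)$ terms cancel, so for every nonzero $\sigma\in\Lambda^2 T_pM$
$$\langle (R+\omega)(\sigma),\sigma\rangle = \langle(\overline R+\bar\omega)(\sigma),\sigma\rangle + 3\langle\alpha(\sigma),\sigma\rangle > 0,$$
the first summand being strictly positive by hypothesis and the second non-negative since $\alpha=A^*A$. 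This shows $R_p$ has strongly positive curvature at each $p\in M$, and Remark~\ref{rem:smoothness} promotes the pointwise construction to a smooth global $4$-form.

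The main obstacle is the rearranged Gray--O'Neill identity above: once this algebraic reformulation is secured, the proof reduces to the one-line positivity calculation. Everything else is a matter of patient index manipulation using the first Bianchi identity and the decomposition $\Sym=\ker\b\oplus\Lambda^4V$.
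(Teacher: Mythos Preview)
Your proposal is correct and follows the same approach as the paper: derive the rearranged Gray--O'Neill identity $R=\overline R|_{\Lambda^2\mathcal H}+3\alpha-3\b(\alpha)$ and then take $\omega=i^*\overline\omega+3\b(\alpha)$, exactly as the paper does. One small imprecision: the first Bianchi identity for $\overline R$ plays no role in the rearrangement---the $-3\b(\alpha)$ term appears simply because the three $A$-tensor summands (with coefficients $+2,-1,+1$) rewrite as $3\langle\alpha(X\wedge Y),Z\wedge W\rangle$ minus the cyclic sum that defines $3\b(\alpha)$.
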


\begin{proof}
Given that strongly positive curvature is a pointwise condition, choose $p\in M$ and $\overline p\in\overline M$ such that $\pi(\overline p)=p$, and set $V=T_pM$ and $\overline V=T_{\overline p}\overline M$. For any $X\in V$, we denote by $\overline X\in\overline V$ its horizontal lift and consider the inclusion map $i\colon V\hookrightarrow\overline V$, $i(X)=\overline X$, through which we identify $V$ with a subspace of $\overline V$. We denote by $V^\perp$ the orthogonal complement of this subspace. The $A$-tensor of the submersion, given by $A_X Y=\tfrac12[\overline X,\overline Y]^{\mathcal V}$, induces a skew-symmetric map $A\colon V\times V\to V^\perp$, which we can interpret as $A\colon\Lambda^2 V\to V^\perp$. Set $\alpha=A^*A\in\Sym$, i.e.,
\begin{equation}\label{eq:alpha}
\langle\alpha(X\wedge Y),Z\wedge W\rangle=\langle A_X Y,A_Z W\rangle.
\end{equation}
Clearly, $\alpha\colon\Lambda^2 V\to\Lambda^2 V$ is a positive-semidefinite operator. From the Gray-O'Neill formulas, see \cite[Thm 9.28f]{besse}, we have
\begin{align*}
\langle R(X\wedge Y),Z\wedge W\rangle &=\langle \overline R(\overline X\wedge\overline Y),\overline Z\wedge\overline W\rangle +2\langle A_{\overline X} \overline Y,A_{\overline Z} \overline W\rangle \\ 
&\quad -\langle A_{\overline Y} \overline Z, A_{\overline X}\overline W\rangle +\langle A_{\overline X}\overline Z,A_{\overline Y}\overline W\rangle \\
&=\langle \overline R(\overline X\wedge\overline Y),\overline Z\wedge\overline W\rangle + 3\langle\alpha(\overline X\wedge\overline Y),\overline Z\wedge\overline W\rangle\\
&\quad -\langle\alpha(\overline Y\wedge\overline Z),\overline X\wedge\overline W\rangle-\langle\alpha(\overline Z\wedge\overline X),\overline Y\wedge\overline W\rangle\\
&\quad -\langle\alpha(\overline X\wedge\overline Y),\overline Z\wedge\overline W\rangle\\
&=\langle \overline R(\overline X\wedge\overline Y),\overline Z\wedge\overline W\rangle + 3\langle\alpha(\overline X\wedge\overline Y),\overline Z\wedge\overline W\rangle\\
&\quad -3\b(\alpha)(\overline X,\overline Y,\overline Z,\overline W).
\end{align*}
Thus, if there exists $\overline\omega\in\Lambda^4\overline V$ such that $\overline R+\overline\omega$ is positive-definite, it follows that $R+\omega$ is positive-definite for $\omega=i^*\overline\omega+3\b(\alpha)\in\Lambda^4 V$.
\end{proof}

The above way of rewriting the Gray-O'Neill formula for curvature operators as
\begin{align}\label{eq:oneill}
\langle R(X\wedge Y),Z\wedge W\rangle &=\langle \overline R(\overline X\wedge\overline Y),\overline Z\wedge\overline W\rangle + 3\langle\alpha(\overline X\wedge\overline Y),\overline Z\wedge\overline W\rangle \nonumber\\
&\quad -3\b(\alpha)(\overline X,\overline Y,\overline Z,\overline W),
\end{align}
where $\alpha$ is given by \eqref{eq:alpha}, seems more natural than its conventional presentation. In particular, from $3\b(\alpha)\in\Lambda^4 V$ and \eqref{eq:secrw}, we immediately recover the well-known
\begin{equation}\label{eq:oneillsec}
\sec(X,Y)=\overline{\sec}(\overline X,\overline Y)+3\|A_X Y\|^2,
\end{equation}
which shows that $\sec>0$ is preserved under Riemannian submersions. To illustrate how this approach makes curvature operator computations easier, we apply it to normal homogeneous spaces to obtain formula \eqref{eq:curvhomsp}, which will be useful to study homogeneous fibrations in Section~\ref{sec:strongfatwallach}.

\begin{example}\label{ex:homsp}
Let $\G/\H$ be a homogeneous space with a \emph{normal} homogeneous metric. Denote by $Q$ the bi-invariant metric on $\G$ and by $\mathfrak g$ and $\h$ the Lie algebras of $\G$ and $\H$ respectively. Let $\h^\perp$ be the $Q$-orthogonal complement of $\h\subset\mathfrak g$, so that we may identify $\h^\perp\cong T_{(e\H)}\G/\H$ and metric on $\G/\H$ with $Q|_{\h^\perp}$. In this situation, the horizontal lift of $X\in\h^\perp$ is simply its inclusion $X\in\mathfrak g$ and the vertical projection $X^{\mathcal V}$ is given by its component $X_\h$ in $\h$. Thus, \eqref{eq:alpha} gives
\begin{equation}\label{eq:alphahomsp}
\big\langle\alpha_{\G/\H}(X\wedge Y),Z\wedge W\big\rangle=\tfrac14Q\big([X,Y]_\h,[Z,W]_\h\big),
\end{equation}
and hence the curvature operator of $(\G/\H,Q|_{\h^\perp})$ is given by
\begin{equation}\label{eq:curvhomsp}
\begin{aligned}
\big\langle R_{\G/\H}(X\wedge Y),Z\wedge W\big\rangle &=\tfrac14Q([X,Y],[Z,W])+\tfrac34Q\big([X,Y]_\h,[Z,W]_\h\big) \\
&\quad -3\b(\alpha_{\G/\H})(X,Y,Z,W).
\end{aligned}
\end{equation}
It is hence clear that the operator $R_{\G/\H}\colon\Lambda^2\h^\perp\to\Lambda^2\h^\perp$ can be modified with the $4$-form $3\b(\alpha_{\G/\H})\in\Lambda^4\h^\perp$ to become positive-semidefinite.
\end{example}

\subsection{Totally geodesic immersions}
Analogously to $\sec>0$, by the Gauss formula, totally geodesic submanifolds also inherit strongly positive curvature:

\begin{proposition}\label{prop:immersions}
Let $i\colon (M,\g)\to (\overline M,\overline \g)$ be a totally geodesic immersion. If $(\overline M,\overline \g)$ has strongly positive curvature, then $(M,\g)$ also has strongly positive curvature.
\end{proposition}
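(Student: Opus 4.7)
The plan is to show that if $\overline\omega$ witnesses strongly positive curvature of $\overline M$ at $\overline p=i(p)$, then the pullback $\omega:=i^*\overline\omega$ witnesses it for $M$ at $p$. Writing $V=T_pM$ and $\overline V=T_{\overline p}\overline M$, and identifying $V\subset\overline V$ via $i_*$, we regard $\Lambda^2V\subset\Lambda^2\overline V$. The claim to establish is that the two quadratic forms $\langle(R+\omega)(\cdot),\cdot\rangle$ on $\Lambda^2V$ and $\langle(\overline R+\overline\omega)(\cdot),\cdot\rangle$ on $\Lambda^2\overline V$ agree on the subspace $\Lambda^2V$; positivity downstairs would then follow from positivity upstairs.

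The first step is to invoke the Gauss equation: since $i$ is totally geodesic, its second fundamental form vanishes, and the Gauss formula reduces to $\langle R(X\wedge Y),Z\wedge W\rangle=\langle\overline R(X\wedge Y),Z\wedge W\rangle$ for all $X,Y,Z,W\in V$. Extending bilinearly in each exterior argument, this gives $\langle R(\alpha),\beta\rangle=\langle\overline R(\alpha),\beta\rangle$ for all $\alpha,\beta\in\Lambda^2V$; in particular the quadratic form of $R$ on $\Lambda^2V$ coincides with that of $\overline R$ restricted to this subspace, including on non-decomposable $2$-vectors.

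The second step is the analogous, easier identity for the $4$-form. By definition of the pullback together with \eqref{eq:wrw}, for any $\alpha=\sum_k X_k\wedge Y_k\in\Lambda^2V$ one has
$$\langle\omega(\alpha),\alpha\rangle=\sum_{k,\ell}\omega(X_k,Y_k,X_\ell,Y_\ell)=\sum_{k,\ell}\overline\omega(X_k,Y_k,X_\ell,Y_\ell)=\langle\overline\omega(\alpha),\alpha\rangle.$$
Adding the two identities yields $\langle(R+\omega)(\alpha),\alpha\rangle=\langle(\overline R+\overline\omega)(\alpha),\alpha\rangle$, which is strictly positive for every nonzero $\alpha\in\Lambda^2V\subset\Lambda^2\overline V$ by the strongly positive hypothesis upstairs. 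Hence $R+\omega$ is positive-definite on $\Lambda^2V$.

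I do not foresee any real obstacle; the only point requiring a moment's care is that positive-definiteness of $R+\omega$ must be verified on the full exterior square $\Lambda^2V$ rather than merely on the Grassmannian $\Gr(V)\subset\Lambda^2V$ of decomposable $2$-vectors, which is why both identities above are established for arbitrary $\alpha$ (the decomposable case alone would only recover the Gauss formula for sectional curvatures and reprove the well-known fact that $\sec>0$ is inherited by totally geodesic submanifolds).
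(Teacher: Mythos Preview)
Your proof is correct and follows precisely the approach the paper indicates: the paper does not spell out a proof but simply remarks that the result follows from the Gauss formula, and your argument makes this explicit by combining the Gauss equation (which, for a totally geodesic immersion, identifies the quadratic form of $R$ on $\Lambda^2V$ with that of $\overline R$ restricted to $\Lambda^2V$) with the observation that $i^*\overline\omega$ likewise restricts the quadratic form of $\overline\omega$. Your care in verifying the identity on all of $\Lambda^2V$, not just on decomposable elements, is exactly the right point to emphasize.
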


\subsection{\texorpdfstring{$\G$-invariance}{G-invariance}}
The following result is fundamental to study manifolds with strongly positive curvature and many symmetries. It follows from convexity and a standard averaging argument, see also P\"uttmann~\cite[Lemma 3.5]{puttmann}.

\begin{proposition}\label{prop:ginvariance}
Suppose that $(M,\g)$ has strongly positive curvature and an isometric action of a compact Lie group $\G$. Then, there exists a $\G$-invariant $\overline\omega\in\Omega^4(M)$ such that $R+\overline\omega$ is positive-definite (at all points in $M$).
\end{proposition}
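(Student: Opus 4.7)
The plan is to obtain the invariant $4$-form by averaging a given (possibly non-invariant) smooth $4$-form over $\G$ via the Haar measure, relying on the convexity of the positivity condition.

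First, I would invoke Remark~\ref{rem:smoothness} to produce a smooth $4$-form $\omega\in\Omega^4(M)$ such that $R_p+\omega_p$ is positive-definite for every $p\in M$. The point of working with a smooth representative from the outset is that the averaged form will then automatically be smooth, by smoothness of the $\G$-action together with compactness of $\G$.

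Next, since $\G$ acts by isometries, for each $g\in\G$ the pullback $g^*$ is fiberwise orthogonal on $\Lambda^2TM$ and commutes with the curvature operator, in the sense that $g^*R=R$ as a section of $\Sym(\Lambda^2TM)$. Consequently, for every $p\in M$,
\begin{equation*}
R_p+(g^*\omega)_p = g^*\bigl(R_{g(p)}+\omega_{g(p)}\bigr)g^*{}^{-1},
\end{equation*}
which is conjugate by an orthogonal transformation to a positive-definite operator, hence itself positive-definite. Thus every translate $g^*\omega$ is a smooth $4$-form that modifies $R$ into a positive-definite operator at every point.

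Finally, I would define
\begin{equation*}
\overline\omega := \int_{\G} g^*\omega \; \dd\mu(g),
\end{equation*}
where $\mu$ is the normalized Haar measure on $\G$. Left-invariance of $\mu$ yields $h^*\overline\omega=\overline\omega$ for all $h\in\G$, so $\overline\omega\in\Omega^4(M)$ is $\G$-invariant and smooth. Pulling $R_p$ (which is $p$-independent of the integration variable) inside the integral gives
\begin{equation*}
R_p+\overline\omega_p = \int_{\G}\bigl(R_p+(g^*\omega)_p\bigr)\dd\mu(g),
\end{equation*}
which is a convex combination (with unit-mass positive measure) of positive-definite self-adjoint operators on $\Lambda^2 T_pM$, and hence is itself positive-definite. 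There is no real obstacle here beyond confirming that the two standard facts used—$\G$-equivariance of the curvature tensor under isometries, and convexity of the cone of positive-definite operators—interact cleanly with the averaging; both are routine.
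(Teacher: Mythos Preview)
Your argument is correct and is precisely the ``convexity and standard averaging argument'' that the paper invokes (without spelling out details): produce a smooth modifying $4$-form via Remark~\ref{rem:smoothness}, average it over $\G$ with respect to Haar measure, and use that each $g^*\omega$ still makes $R$ positive-definite (by isometry-equivariance of $R$) together with convexity of the positive-definite cone. The only cosmetic point is that, depending on whether the action is written on the left or right, it is bi-invariance of Haar measure on the compact group $\G$ that guarantees $h^*\overline\omega=\overline\omega$; this is of course automatic here.
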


\subsection{Cheeger deformations}
Let $(M,\g)$ be a Riemannian manifold with an isometric action by a compact Lie group $\G$. Its \emph{Cheeger deformation}, introduced in~\cite{cheeger}, is a path $\g_t$ of $\G$-invariant metrics on $M$ starting at $\g$, obtained by shrinking the orbits of the $\G$-action (at possibly different rates), see also \cite{mueter,zillermueter}. More precisely, let $Q$ be a bi-invariant metric on $\G$ and consider the product manifold $(M\times \G,\g+\tfrac1t Q)$. Then, $\g_t$ is defined as the metric on $M$ that makes
\begin{equation}\label{eq:submcheeger}
\pi\colon \left(M\times \G,\g+\tfrac1t Q\right)\longrightarrow (M,\g_t), \quad \pi(p,g)=g^{-1}p,
\end{equation}
a Riemannian submersion. Note that it automatically follows from \eqref{eq:oneillsec} that if $(M,\g)$ has $\sec\geq0$, so does $(M,\g_t)$, for any $t>0$.

In order to explicitly compute the curvature operator of $(M,\g_t)$, we need to establish some notation. Let $\mathfrak g$ be the Lie algebra of $\G$ and $\mathfrak g_p$ the Lie algebra of the isotropy $\G_p$, for each $p\in M$. Consider the $Q$-orthogonal splitting $\mathfrak g=\mathfrak g_p\oplus\m_p$, and identify $\m_p$ with the tangent space to the $\G$-orbit through $p$ via action fields, i.e., we identify each $X\in\m_p$ with $X^*_p=\tfrac{\dd}{\dd s}\exp(sX)p|_{s=0}$. This determines a $\g_t$-orthogonal splitting\footnote{We note that this $\g_t$-orthogonal splitting is actually independent of $t$, since vertical (respectively horizontal) directions of $\g$ remain vertical (respectively horizontal) for $\g_t$, $t>0$.} $T_pM=\mathcal{V}_p\oplus\mathcal{H}_p$ in \emph{vertical} and \emph{horizontal spaces}, respectively
\begin{equation*}
\mathcal{V}_p:=T_p \G(p)=\{X^*_p:X\in\mathfrak m_p\} \,\,\mbox{ and } \,\, \mathcal{H}_p:=\{v\in T_pM : \g_t(v,\mathcal{V}_p)=0\}.
\end{equation*}
For each $t\geq0$, let $P_t\colon\m_p\to\m_p$ and $C_t\colon T_pM\to T_pM$ be the symmetric automorphisms such that
\begin{align*}
Q(P_t(X),Y) &=\g_t(X^*_p,Y^*_p), \quad X,Y\in\mathfrak m_p \\
\g(C_t(X),Y) &=\g_t(X,Y), \quad X,Y\in T_pM.
\end{align*}
By computing the horizontal lift of $X$ under \eqref{eq:submcheeger}, it is not hard to prove that
\begin{equation}\label{eq:ptct}
\begin{aligned}
P_t(X) &=(P_0^{-1}+t\id)^{-1}(X)=P_0\,(\id+tP_0)^{-1}(X), \quad X\in \m_p, \\
C_t(X) &=P_0^{-1}P_t(X^\mathcal{V})+X^\mathcal{H}, \quad X\in T_pM,
\end{aligned}
\end{equation}
see \cite[Prop 1.1]{zillermueter}. The inverse operator $C_t^{-1}\colon T_pM\to T_pM$ is called \emph{Cheeger reparametrization}, and its use greatly simplifies computations. This is due to the fact that the horizontal lift $\overline X\in T_pM\times\mathfrak g$ of $X\in T_pM$ with respect to \eqref{eq:submcheeger} is quite complicated, while the horizontal lift of $C_t^{-1}X=(\id+tP_0)X^*_\mathfrak m+X^\mathcal H$ is simply
\begin{equation}
\overline{C_t^{-1}X}=\big(X,-tP_0X_\mathfrak m\big)\in T_pM\times\mathfrak g.
\end{equation}
After these observations, we are now ready to prove Theorem~\ref{mainthm:cheeger} on Cheeger deformations of manifolds with strongly positive curvature:

\begin{theorem}\label{thm:cheegerdef}
Suppose that $(M,\g)$ has strongly positive curvature and an isometric action of a compact Lie group $\G$. Then, the corresponding Cheeger deformation $(M,\g_t)$ also has strongly positive curvature for all $t>0$.
\end{theorem}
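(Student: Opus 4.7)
The plan is to apply the submersion strategy of Theorem~\ref{thm:A} to the defining submersion~\eqref{eq:submcheeger} of the Cheeger deformation,
\[
\pi\colon (M\times\G,\g+\tfrac{1}{t}Q)\longrightarrow (M,\g_t), \quad \pi(p,g)=g^{-1}p.
\]
The immediate difficulty is that Theorem~\ref{thm:A} cannot be invoked as a black box: the product metric on $M\times\G$ has mixed tangent planes of zero sectional curvature, so $(M\times\G,\g+\tfrac{1}{t}Q)$ itself does \emph{not} have strongly positive curvature. However, revisiting the derivation in its proof yields the pointwise identity
\[
\big\langle(R_t+\omega_t)(\sigma),\sigma\big\rangle_{\g_t}=\big\langle(\overline R+\overline\omega)(\bar\sigma),\bar\sigma\big\rangle_{\overline\g}+3\big\langle\alpha_t(\sigma),\sigma\big\rangle_{\g_t}
\]
for every $\sigma\in\Lambda^2T_pM$, where $\bar\sigma\in\Lambda^2(T_pM\oplus\mathfrak{g})$ is the horizontal lift, $\alpha_t$ is the squared $A$-tensor of $\pi$, and $\omega_t:=i^*\overline\omega+3\b(\alpha_t)$. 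Since $\alpha_t\geq 0$, it is enough to choose $\overline\omega$ so that the first summand on the right is strictly positive for every $\sigma\neq 0$: in other words, $\overline R+\overline\omega$ need only be positive-definite on the subspace of horizontal-lifted bivectors, not on all of $\Lambda^2(T_pM\oplus\mathfrak{g})$.

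To produce such an $\overline\omega$, I would apply Proposition~\ref{prop:ginvariance} to pick a $\G$-invariant $\omega_M\in\Omega^4(M)$ with $R^M+\omega_M>0$, and take $\overline\omega:=\pi_M^*\omega_M$, the pullback under the first projection $\pi_M\colon M\times\G\to M$. The key observation is that, by~\eqref{eq:ptct}, the horizontal lift of $Y\in T_pM$ under $\pi$ is $(C_tY,-tP_0(C_tY)_\mathfrak{m})\in T_pM\oplus\mathfrak{g}$; hence the $\Lambda^2T_pM$-component of $\bar\sigma$ equals $(\Lambda^2C_t)\sigma$, which is nonzero for every $\sigma\neq 0$ since $C_t$ is invertible. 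Combining this with the block-diagonal structure of the Riemannian product, whereby $\overline R$ acts as $R^M$ on $\Lambda^2T_pM$, as $R^\G_{Q/t}$ on $\Lambda^2\mathfrak{g}$, and vanishes on the mixed part $T_pM\wedge\mathfrak{g}$, together with the positive-semidefiniteness of the bi-invariant curvature operator $R^\G_{Q/t}$, yields
\[
\big\langle(\overline R+\overline\omega)(\bar\sigma),\bar\sigma\big\rangle=\big\langle(R^M+\omega_M)\big((\Lambda^2C_t)\sigma\big),(\Lambda^2C_t)\sigma\big\rangle_\g+\big\langle R^\G_{Q/t}(\bar\sigma_\G),\bar\sigma_\G\big\rangle>0
\]
for every $\sigma\neq 0$, with $\bar\sigma_\G$ the $\Lambda^2\mathfrak{g}$-component of $\bar\sigma$.

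The main technical obstacle lies in the bookkeeping of the decomposition $\Lambda^2(T_pM\oplus\mathfrak{g})=\Lambda^2T_pM\oplus(T_pM\wedge\mathfrak{g})\oplus\Lambda^2\mathfrak{g}$ of $\bar\sigma$, the careful verification that $\overline R$ really splits as claimed, and the identification of the operator $\overline\omega=\pi_M^*\omega_M$ on $\Lambda^2(T_pM\oplus\mathfrak{g})$ as acting by $\omega_M$ on the first block and by zero elsewhere; the smoothness of the resulting $\omega_t\in\Omega^4(M)$ then follows as in Remark~\ref{rem:smoothness}.
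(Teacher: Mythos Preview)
Your proof is correct and follows essentially the same route as the paper: both apply the Gray--O'Neill rearrangement~\eqref{eq:oneill} to the Cheeger submersion~\eqref{eq:submcheeger}, split the total-space curvature into its $M$- and $\G$-components, and use that $R^M+\omega_M>0$ while $R^\G\geq 0$ and $\alpha\geq 0$. The only cosmetic difference is that the paper works throughout in the Cheeger-reparametrized coordinates $C_t^{-1}X$ (so that the $\G$-curvature term appears explicitly as $t^3 Q\big([P_0X_\m,P_0Y_\m],[P_0Z_\m,P_0W_\m]\big)$), whereas you keep the product block structure abstract; the underlying argument and the resulting $\omega_t$ are the same.
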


\begin{proof}
Denote by $C_t^{-1}(X\wedge Y):=C_t^{-1}X\wedge C_t^{-1}Y$ the map induced by the Cheeger reparametrization on $\Lambda^2 TM$. Applying \eqref{eq:oneill} in the above situation, we get that the curvature operator $R_t\colon\Lambda^2 T_pM\to\Lambda^2 T_pM$ of $(M,\g_t)$ is given by the formula
\begin{align*}
\left\langle R_t\big(C_t^{-1}(X\wedge Y)\big),C_t^{-1}(Z\wedge W)\right\rangle_t &=\langle R(X\wedge Y),Z\wedge W\rangle \\
&\quad +t^3 Q\big([P_0X_\m,P_0Y_\m],[P_0Z_\m,P_0W_\m]\big) \\
&\quad+3\left\langle\alpha\left(\overline{C_t^{-1}X}\wedge \overline{C_t^{-1}Y}\right),\overline{C_t^{-1}Z}\wedge \overline{C_t^{-1}W}\right\rangle \\
&\quad-3\b(\alpha)\left(\overline{C_t^{-1}X},\overline{C_t^{-1}Y},\overline{C_t^{-1}Z},\overline{C_t^{-1}W}\right),
\end{align*}
where $\alpha=A^*A$ is the positive-semidefinite operator given by \eqref{eq:alpha}. Similarly, the second term is a multiple of the quadratic form associated to the positive-semidefinite operator $L^*L$, where $L(X\wedge Y)=[P_0X_\m,P_0Y_\m]$. Thus, if there exists $\omega\in\Omega^4(M)$ such that $R+\omega$ is positive-definite, then setting $\omega_t\in\Omega^4(M)$ so that
\begin{align*}
\omega_t\left(\overline{C_t^{-1}X},\overline{C_t^{-1}Y},\overline{C_t^{-1}Z},\overline{C_t^{-1}W}\right)&=
\omega(X,Y,Z,W)\\
&\quad+3\b(\alpha)\left(\overline{C_t^{-1}X},\overline{C_t^{-1}Y},\overline{C_t^{-1}Z},\overline{C_t^{-1}W}\right),
\end{align*}
it follows that $R_t+\omega_t$ is positive-definite for all $t>0$.
\end{proof}

\section{Compact Rank One Symmetric Spaces}
\label{sec:cross}

The first and most well-known examples of closed manifolds with $\sec>0$ are the Compact Rank One Symmetric Spaces (CROSS). These are the manifolds $\G/\K$, where $(\G,\K)$ is a symmetric pair of rank one, and consist of:
\begin{itemize}
\item[(i)] Spheres $S^n=\SO(n+1)/\SO(n)$;
\item[(ii)] Complex projective spaces $\C P^n=\SU(n+1)/\mathsf{S}(\U(n)\U(1))$;
\item[(iii)] Quaternionic projective spaces $\Hr P^n=\Sp(n+1)/\Sp(n)$;
\item[(iv)] Cayley plane $\Ca P^2=\mathsf{F}_4/\Spin(9)$.
\end{itemize}
Denote respectively by $\k$ and $\mathfrak g$ the Lie algebras of $\K$ and $\G$, let $Q$ be a bi-invariant metric on $\G$, and $\mathfrak g=\k\oplus\m$ be a $Q$-orthogonal splitting. The standard Riemannian metric (round or Fubini-Study) is the homogeneous metric induced by $Q|_\m$, which is called \emph{normal homogeneous metric}. In the above cases, this metric is the \emph{unique} $\G$-invariant metric, since the corresponding isotropy representation is irreducible. 

\begin{remark}
Some of the above manifolds also admit homogeneous metrics with smaller isometry groups, including homogeneous metrics with $\sec>0$ \cite{vz,ziller}. However, unless otherwise mentioned, we henceforth assume that any sphere or projective space is endowed with the above standard metric.
\end{remark}

From Example~\ref{ex:homsp}, it follows that the curvature operator \eqref{eq:curvhomsp} of any compact normal homogeneous space can be modified with (an invariant) $4$-form to become positive-semidefinite. We now analyze in which cases among the CROSS it can be further modified to become positive-definite.

\begin{proposition}\label{prop:cross1}
The symmetric spaces $S^n$, $\C P^n$ and $\Hr P^n$ have strongly positive curvature.
\end{proposition}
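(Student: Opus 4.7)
The plan is to exhibit, in each case, either a positive curvature operator (which trivially gives strong positivity by \eqref{eq:implications}) or a Riemannian submersion from a space known to have strongly positive curvature, and then invoke Theorem~\ref{thm:submersions}.

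First I would dispose of the sphere. With the round metric of curvature $1$, the curvature operator of $S^n$ is
\begin{equation*}
R_{S^n}(X\wedge Y)=X\wedge Y,
\end{equation*}
i.e., $R_{S^n}=\id$ on $\Lambda^2 T_p S^n$. In particular $R_{S^n}>0$, so by the first implication in \eqref{eq:implications}, $S^n$ has strongly positive curvature (one may simply take $\omega=0$).

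For $\C P^n$ and $\Hr P^n$, I would use the complex and quaternionic Hopf fibrations
\begin{equation*}
S^1\to S^{2n+1}\to \C P^n \quad\text{and}\quad S^3\to S^{4n+3}\to\Hr P^n,
\end{equation*}
which are Riemannian submersions when the total spaces carry the round metric of curvature $1$ and the bases carry the standard Fubini-Study metric. Since $S^{2n+1}$ and $S^{4n+3}$ both have strongly positive curvature by the previous paragraph, Theorem~\ref{thm:submersions} immediately implies that $\C P^n$ and $\Hr P^n$ also have strongly positive curvature.

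There is no real obstacle here; the entire content of the statement is packaged into Theorem~\ref{thm:submersions} together with the existence of the classical Hopf fibrations. As an alternative, one could bypass Theorem~\ref{thm:submersions} and argue directly on $\C P^n$ (respectively $\Hr P^n$) by writing down an explicit $4$-form $\omega$ — for instance a suitable multiple of $\Omega\wedge\Omega$, where $\Omega$ is the Kähler (respectively the fundamental $4$-form built from the quaternionic structure) — such that $R+\omega$ is positive-definite, as mentioned in the footnote on the Kähler form in the introduction. This is consistent with, but less efficient than, the submersion approach and is only worth spelling out if one wants an explicit modifying $4$-form in hand (see Remark~\ref{rem:kahlerform}).
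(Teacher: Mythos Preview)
Your proposal is correct and follows essentially the same argument as the paper: the round sphere has $R=\id>0$, and $\C P^n$, $\Hr P^n$ inherit strongly positive curvature from the round sphere via Theorem~\ref{thm:submersions} applied to the Hopf fibrations. Your closing remark about the explicit $4$-form is exactly the content of Remark~\ref{rem:kahlerform}.
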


\begin{proof}
The curvature operator of the unit round sphere $S^n$ is the identity map $\id\colon\Lambda^2 TS^n\to\Lambda^2 TS^n$, which is clearly positive-definite, hence $S^n$ has strongly positive curvature. The curvature operators of $\C P^n$ and $\Hr P^n$ are positive-semidefinite, but have nontrivial kernel. However, since the Hopf bundles
\begin{equation*}
S^1\longrightarrow S^{2n+1}\longrightarrow \C P^n \quad \mbox{ and }\quad S^3\longrightarrow S^{4n+3}\longrightarrow \Hr P^n
\end{equation*}
are Riemannian submersions, Theorem~\ref{thm:submersions} implies that $\C P^n$ and $\Hr P^n$ also have strongly positive curvature.
\end{proof}

\begin{remark}\label{rem:kahlerform}
In the case of $\C P^n$, the operator $\alpha$ (see \eqref{eq:alpha}) is given by $\omega_{\text{FS}}\otimes\omega_{\text{FS}}$, where $\omega_{\text{FS}}$ is the standard K\"ahler form. In particular, $3\b(\alpha)=\tfrac12\omega_{\text{FS}}\wedge\omega_{\text{FS}}$ is a $4$-form that modifies the curvature operator of $\C P^n$ to become positive-definite. The case of $\Hr P^n$ is analogous, in terms of its hyper-K\"ahler structure.
\end{remark}

Notice that the above argument does not apply to $\Ca P^2$, given that there are no submersions from round spheres to the Cayley plane.\footnote{Even more, for topological reasons, there are no fiber bundles $S^n\to\Ca P^2$, see \cite{browder}.} Actually, the following result (combined with Theorem~\ref{thm:submersions}) provides an alternative proof of this fact.

\begin{proposition}\label{prop:cross2}
The symmetric space $\Ca P^2$ does not have strongly positive curvature.
\end{proposition}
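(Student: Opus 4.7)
The plan is to use Proposition~\ref{prop:ginvariance} together with Hodge theory to force any modifying $4$-form to vanish, and then to show that $R$ by itself fails to be positive-definite.

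First, I apply Proposition~\ref{prop:ginvariance} to the isometric $\mathsf{F}_4$-action on $\Ca P^2$: if $\Ca P^2$ had strongly positive curvature, there would exist an $\mathsf{F}_4$-invariant $\omega\in\Omega^4(\Ca P^2)$ such that $R+\omega$ is positive-definite everywhere. By a classical theorem of Cartan, every $\G$-invariant differential form on a compact symmetric space $\G/\K$ is harmonic (invariant forms are closed since the geodesic symmetry at the base point acts as $(-1)^k$ on $k$-forms, and coclosed by Hodge duality); thus Hodge theory identifies the space of $\G$-invariant $k$-forms with $H^k(\G/\K;\R)$. Since $H^*(\Ca P^2;\R)=\R[x]/(x^3)$ with $|x|=8$, we have $H^4(\Ca P^2;\R)=0$, and hence $\omega\equiv 0$. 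So $R$ itself would need to be positive-definite everywhere.

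Next, I would contradict this by computing $R$ directly. Writing $\mathfrak f_4=\mathfrak{spin}(9)\oplus\m$ with $\m\cong\R^{16}$, the symmetric-space relation $[\m,\m]\subset\mathfrak{spin}(9)$ applied to \eqref{eq:curvhomsp} of Example~\ref{ex:homsp} reduces $R$ (up to a positive scalar) to $T^{*}T$, where $T\colon\Lambda^2\m\to\mathfrak{spin}(9)$ is the bracket map; the Bianchi-projection correction vanishes identically here, as one can verify by applying $\b$ to both sides of the symmetric-space expression and using $\b(R)=0$. Since $[\m,\m]$ is a nonzero ideal of the simple Lie algebra $\mathfrak{spin}(9)$, we have $[\m,\m]=\mathfrak{spin}(9)$ and $T$ is surjective. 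Consequently $\ker R=\ker T$ has dimension $\binom{16}{2}-36=84>0$, contradicting positive-definiteness.

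The main obstacle is the Hodge-theoretic vanishing of $\mathsf{F}_4$-invariant $4$-forms on $\Ca P^2$; once this is in hand, the rest follows from standard symmetric-space theory. This cohomological feature is precisely what distinguishes $\Ca P^2$ from the other CROSS and makes the modification strategy impossible.
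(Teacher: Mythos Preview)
Your argument is correct and follows essentially the same route as the paper: use Proposition~\ref{prop:ginvariance} to obtain an $\mathsf F_4$-invariant $4$-form, invoke the Cartan theorem that invariant forms on a compact symmetric space are harmonic, and conclude from $H^4(\Ca P^2;\R)=0$ that $\omega=0$, contradicting the fact that $R$ itself has nontrivial kernel. The only differences are presentational: the paper first observes $\omega\neq0$ (since $\ker R\neq0$) and then derives the cohomological contradiction, whereas you first force $\omega=0$ and then exhibit the kernel; and you supply the explicit dimension count $\dim\ker R=\binom{16}{2}-\dim\mathfrak{spin}(9)=84$, which the paper leaves implicit.
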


\begin{proof}
Assume by contradiction that $\Ca P^2$ has strongly positive curvature. Then, by Proposition~\ref{prop:ginvariance}, there exists an $\mathsf F_4$-invariant $\omega\in\Omega^4(\Ca P^2)$ such that $R+\omega$ is positive-definite. Notice that $\omega\neq0$, since $R$ is positive-semidefinite but has nontrivial kernel. Since $\Ca P^2=\mathsf F_4/\Spin(9)$ is a compact symmetric space, $\omega$ is $\mathsf F_4$-invariant if and only if it is harmonic \cite[p. 227]{helgason}. By Hodge theory, $[\omega]\in H^4(\Ca P^2,\R)$ is a nontrivial cohomology class, contradicting the fact that $b_4(\Ca P^2,\R)=0$.
\end{proof}

\begin{remark}\label{rem:zoltek}
The first examples of algebraic curvature operators $R\colon\Lambda^2 V\to\Lambda^2 V$, $\dim V\geq 5$, that have $\sec_R>0$ but 
do not have strongly positive curvature were found by Zoltek~\cite{zoltek}.
Note that any algebraic curvature operator can be realized as the curvature operator of a Riemannian manifold \emph{at one point} (see \cite[p. 104]{jaco}). However, to our knowledge, no closed manifolds with $\sec>0$ were known not to have strongly positive curvature. By the above, the Cayley plane $\Ca P^2$ is one such example. Moreover, the Berger space $B^{13}$ and certain Berger metrics on $S^{4n+3}$ provide further examples, see Remark~\ref{rem:b13} and Appendix~\ref{sec:berger}.
\end{remark}

\section{Strong Fatness and the Strong Wallach Theorem}
\label{sec:strongfatwallach}

All examples of closed homogeneous manifolds with $\sec>0$ (with one exception) are the total space of a homogeneous fibration (cf. Theorem~\ref{thm:homsp}). A unified treatment of these examples is given by the classic result of Wallach~\cite[Sec 7]{wa}, see also \cite{es} and \cite[Prop 4.3]{zillersurvey}. In this section, we strengthen it to handle strongly positive curvature.

Let $\H\subset\K\subset\G$ be compact Lie groups, and consider the \emph{homogeneous fibration}
\begin{equation}\label{eq:homfib}
\K/\H\longrightarrow \G/\H\stackrel{\pi}{\longrightarrow} \G/\K, \quad \pi(g\H)=g\K.
\end{equation}
Denote by $\h\subset \k\subset \mathfrak g$ the Lie algebras of $\H\subset\K\subset\G$. Fix a bi-invariant metric $Q$ on $\G$ and $Q$-orthogonal splittings
\begin{equation}\label{eq:mp}
\mathfrak g=\k\oplus\m, \quad [\k,\m]\subset\m, \quad \mbox{ and } \quad \k=\h\oplus\p, \quad [\h,\p]\subset\p,
\end{equation}
so that there are natural identifications of the tangent spaces
\begin{equation*}
\m\cong T_{(e\K)}\G/\K, \quad \p\cong T_{(e\H)}\K/\H \quad \mbox{ and } \quad \m\oplus\p\cong T_{(e\H)}\G/\H.
\end{equation*}
With the above, we also identify $\mathrm{Ad}$-invariant inner products on $\m$ with the induced $\G$-invariant metrics on $\G/\K$, $\mathrm{Ad}$-invariant elements of $\Lambda^k \m$ with the induced $\G$-invariant forms in $\Omega^k(\G/\K)$, and analogously for the homogeneous spaces $\K/\H$ and $\G/\H$. Consider the homogeneous metrics on $\G/\H$ given by
\begin{equation}\label{eq:gt}
\g_t=t\, Q|_{\p}+Q|_{\m}, \quad t>0,
\end{equation}
so that $\g_1$ is a normal homogeneous metric, and $\g_t$ is obtained by rescaling it by $t$ in the direction of the fibers of \eqref{eq:homfib}. 

\subsection{Strong Fatness}\label{subsec:strongfat}
Apart from positive curvature assumptions on the base and fiber, a crucial ingredient in Wallach's result is that \eqref{eq:homfib} is a \emph{fat bundle}. The concept of \emph{fatness}, introduced by Weinstein~\cite{weinstein2}, is related to positivity of the vertizontal planes, i.e., those spanned by a vertical and a horizontal vector, see also Ziller~\cite{fatness}. In the above situation, we can state it as:
\begin{equation*}
\text{(Fatness) For any } X\in\m \text{ and }Y\in\p, \text{ if }\|[X,Y]\|^2=0 \text{ then }X=0 \text{ or }Y=0.
\end{equation*}
In order to establish the appropriate version of fatness for strongly positive curvature, consider the natural splittings
\begin{equation}\label{eq:lambdas}
\begin{aligned}
\Lambda^2(\m\oplus\p) &=\Lambda^2\m\oplus\Lambda^2\p\oplus(\m\otimes\p), \text { and }\\
\Lambda^4(\m\oplus\p) &=\Lambda^4\m\oplus\Lambda^4\p\oplus(\Lambda^3\m\otimes\p)\oplus(\Lambda^2\m\otimes\Lambda^2\p)\oplus (\m\otimes\Lambda^3\p).
\end{aligned}
\end{equation}
Consider the linear map $L$ given on decomposable elements of $\m\otimes\p$ by
\begin{equation}\label{eq:L}
L\colon\m\otimes\p\longrightarrow\m, \quad L(X\wedge Y)=[X,Y],
\end{equation}
and extended by linearity to the entire $\m\otimes\p$. This linear map induces the operator
\begin{equation}\label{eq:F}
F\colon \m\otimes\p\longrightarrow\m\otimes\p, \quad F=L^*L,
\end{equation}
which is clearly positive-semidefinite and has nontrivial kernel equal to $\ker L$. Given that $F$ is a self-adjoint linear operator on a subspace of $\Lambda^2(\m\oplus\p)$, it makes sense to add to it a $4$-form $\tau\in\Lambda^2\m\otimes\Lambda^2\p\subset\Lambda^4(\m\oplus\p)$, by using \eqref{eq:wrw}. The appropriate strengthening of fatness needed for strongly positive curvature is:
\begin{align*}
&\text{(Strong Fatness) There exists } \tau \in\Lambda^2\m\otimes\Lambda^2\p\subset\Lambda^4(\m\oplus\p) \text{ such that } \\
&\text{the operator } (F+\tau)\colon \m\otimes\p\to\m\otimes\p \text{ is positive-definite.}
\end{align*}
Clearly, a decomposable element of $\m\otimes\p$ is of the form $X\wedge Y$, where $X\in\m$ and $Y\in\p$. Since $\big\langle(F+\tau)(X\wedge Y),X\wedge Y\big\rangle=\|L(X\wedge Y)\|^2=\|[X,Y]\|^2$,
strong fatness automatically implies fatness.

\subsection{First-order Lemma}
We now state the following elementary \emph{first-order} perturbation argument, that is used throughout the rest of the paper.

\begin{lemma}\label{lemma:firstorder}
Let $V$ be a finite-dimensional vector space with an inner product $\langle\cdot,\cdot\rangle$. Let $A$ and $B$ be self-adjoint operators on $V$, such that $A$ is positive-semidefinite and $B\colon\ker A\to\ker A$ is positive-definite. Then $A+\varepsilon B$ is positive-definite for all $\varepsilon>0$ sufficiently small.
\end{lemma}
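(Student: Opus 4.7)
The plan is to use the natural orthogonal decomposition $V=\ker A\oplus(\ker A)^\perp$ together with elementary estimates. Since $A$ is self-adjoint and positive-semidefinite, it preserves this splitting and its restriction to $(\ker A)^\perp$ is positive-definite. Let $\lambda>0$ be the smallest eigenvalue of $A|_{(\ker A)^\perp}$ and let $\mu>0$ be the smallest eigenvalue of $B|_{\ker A}$, which exists by hypothesis and finite-dimensionality. Let $c=\|B\|$ denote the operator norm of $B$; if $c=0$ the statement is vacuous, so assume $c>0$.

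Given any $v\in V$, write $v=v_0+v_1$ with $v_0\in\ker A$ and $v_1\in(\ker A)^\perp$. A direct computation, using that $Av_0=0$, gives
\begin{equation*}
\langle(A+\varepsilon B)v,v\rangle=\langle Av_1,v_1\rangle+\varepsilon\langle Bv_0,v_0\rangle+2\varepsilon\langle Bv_0,v_1\rangle+\varepsilon\langle Bv_1,v_1\rangle.
\end{equation*}
The first two terms are bounded below by $\lambda\|v_1\|^2$ and $\varepsilon\mu\|v_0\|^2$ respectively. For the last two terms, Cauchy--Schwarz gives $|\langle Bv_0,v_1\rangle|\leq c\|v_0\|\|v_1\|$ and $|\langle Bv_1,v_1\rangle|\leq c\|v_1\|^2$. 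The only genuine issue is the mixed term $2\varepsilon\langle Bv_0,v_1\rangle$, which I absorb by Young's inequality $2\|v_0\|\|v_1\|\leq\tfrac{\mu}{2c}\|v_0\|^2+\tfrac{2c}{\mu}\|v_1\|^2$; this yields
\begin{equation*}
\langle(A+\varepsilon B)v,v\rangle\geq \tfrac{\varepsilon\mu}{2}\|v_0\|^2+\left(\lambda-\varepsilon\left(\tfrac{2c^2}{\mu}+c\right)\right)\|v_1\|^2.
\end{equation*}

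Choosing $0<\varepsilon<\lambda\big/\bigl(\tfrac{2c^2}{\mu}+c\bigr)$ makes both coefficients strictly positive, so the expression is strictly positive unless $v_0=v_1=0$, i.e.\ unless $v=0$. This proves that $A+\varepsilon B$ is positive-definite for all sufficiently small $\varepsilon>0$.

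The main (and only) obstacle is controlling the cross term $\langle Bv_0,v_1\rangle$, since $B$ need not preserve $\ker A$; the Young inequality trick above is exactly what allows this off-diagonal contribution to be absorbed into the two diagonal terms at the cost of a small multiplicative factor. Everything else is a routine application of spectral theory on a finite-dimensional inner product space. Alternatively, one could invoke first-order analytic perturbation theory for self-adjoint operators to say that the eigenvalues of $A+\varepsilon B$ depend continuously on $\varepsilon$ and that those which vanish at $\varepsilon=0$ have positive derivative equal to the eigenvalues of $B|_{\ker A}$; but the elementary estimate above seems preferable for its self-containedness.
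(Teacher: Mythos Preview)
Your proof is correct. The paper does not actually supply a proof of this lemma, calling it an ``elementary \emph{first-order} perturbation argument'' and leaving it at that; your explicit estimate via the decomposition $V=\ker A\oplus(\ker A)^\perp$ and Young's inequality is a clean way to fill this gap, and the perturbation-theoretic alternative you mention at the end is presumably what the authors had in mind given the name they chose.
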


\subsection{Strong Wallach Theorem}
We are now ready for the strongly positive curvature version of the result of Wallach~\cite[Sec 7]{wa} on homogeneous fibrations with $\sec>0$, see also \cite{es} and \cite[Prop 4.3]{zillersurvey}.

\begin{theorem}\label{thm:wallach}
Suppose that the homogeneous fibration \eqref{eq:homfib} satisfies:
\begin{itemize}
\item[(i)] the base $(\G/\K,Q|_\m)$ is a CROSS different from $\Ca P^2$ and $(\mathfrak g,\k)$ is a symmetric pair;
\item[(ii)] the fiber $(\K/\H,Q|_\p)$ has constant positive curvature, and either $(\k,\h)$ is a symmetric pair or $\dim\K/\H\leq3$;
\item[(iii)] strong fatness (see Subsection~\ref{subsec:strongfat}).
\end{itemize}
Then $(\G/\H,\g_t)$ has strongly positive curvature for all $0<t<1$.
\end{theorem}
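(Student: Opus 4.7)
The plan is to compute the modified curvature operator of $(\G/\H, \g_t)$ at the identity coset and exhibit a 4-form $\omega_t \in \Lambda^4(\m \oplus \p)$ such that $R_t + \omega_t$ is positive-definite for every $t \in (0,1)$. By Proposition~\ref{prop:ginvariance}, $\G$-invariance reduces this to a linear-algebra statement on $\m \oplus \p$. I would first derive an explicit formula for $R_t$ by a direct homogeneous computation generalizing \eqref{eq:curvhomsp}, or equivalently by realizing $\g_t$ as a submersion metric from a suitable product such as $(\G \times \K/\H,\, Q \oplus t\,Q|_{\p})$ and applying \eqref{eq:oneill}; at $t = 1$ the formula collapses to Example~\ref{ex:homsp}, and the general $t$-dependence is polynomial in $t$ and $1-t$.

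The natural candidate modification is
\[
\omega_t \;=\; 3\,\b(\alpha_{\G/\H}) \;+\; \omega_{\mathrm{base}} \;+\; (1-t)\,\tau,
\]
where $\omega_{\mathrm{base}} \in \Lambda^4\m \subset \Lambda^4(\m \oplus \p)$ is the 4-form from Proposition~\ref{prop:cross1} and Remark~\ref{rem:kahlerform} that modifies the base CROSS to be positive-definite, and $\tau \in \Lambda^2\m \otimes \Lambda^2\p$ is supplied by strong fatness (iii). Relative to the splitting \eqref{eq:lambdas}, these three summands act on disjoint blocks, and I would verify positive-definiteness of $R_t + \omega_t$ on each of $\Lambda^2\m$, $\Lambda^2\p$, and $\m \otimes \p$ in turn. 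On $\Lambda^2\m$, hypothesis (i) together with Example~\ref{ex:homsp} and Proposition~\ref{prop:cross1} identifies this block, up to a positive factor, with the modified curvature operator of the base CROSS, which is positive-definite by the choice of $\omega_{\mathrm{base}}$. On $\Lambda^2\p$, hypothesis (ii) and constant positive fiber curvature give a positive multiple of the identity. On $\m \otimes \p$, the block takes the form $\lambda(t)\,F + (1-t)\,\tau$ for some positive $\lambda(t)$, and is positive-definite by strong fatness.

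What remains is to handle the off-diagonal blocks joining $\Lambda^2\m$, $\Lambda^2\p$, and $\m \otimes \p$. The symmetric pair conditions $[\m, \m] \subset \k$ and $[\p, \p] \subset \h$ are crucial here: they force the most dangerous $\Lambda^2\m$-to-$\Lambda^2\p$ cross-term to vanish, leaving only cross-terms that couple through $\m \otimes \p$. These cross-terms carry explicit factors of $t$ and $1-t$ and can be absorbed by a Schur-complement estimate against the strict positivity of the diagonal blocks obtained above; alternatively, one can apply Lemma~\ref{lemma:firstorder} with $A = R_t + 3\,\b(\alpha_{\G/\H})$ and $B = \omega_{\mathrm{base}} + (1-t)\,\tau$ to get positivity near $t = 1$, and then extend by an interpolation argument in $t$.

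The hard part will be the bookkeeping: ensuring positive-definiteness uniformly over the full interval $(0,1)$ rather than only near $t = 1$, and treating the exceptional case $\dim \K/\H \leq 3$ of hypothesis (ii), in which $(\k, \h)$ need not be a symmetric pair. In that case $\K/\H$ is a sphere of dimension at most three, and I would replace the symmetric-pair step on the $\Lambda^2\p$-block by a direct computation using the explicit structure of $\p$ and the brackets $[\p,\p]$, which remain controllable because the fiber still has constant positive curvature.
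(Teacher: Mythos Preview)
Your structural claim about the off-diagonal blocks is backwards, and this is the main gap. Using $[\m,\m]\subset\k$ (and $[\p,\k]\subset\p$), a direct check in \eqref{eq:curvopwallach} shows that the blocks coupling $\m\otimes\p$ with $\Lambda^2\m$ and with $\Lambda^2\p$ \emph{vanish}, while the $\Lambda^2\m$--$\Lambda^2\p$ block $\widehat R_{12}$ is generally \emph{nonzero}: for $X,Y\in\m$ and $Z,W\in\p$ one has $[X,Y]\in\k$ and $[Z,W]\in\k$, so terms like $Q([X,Y],[Z,W])$ and $Q([X,Y]_\h,[Z,W]_\h)$ survive. Your plan to control the cross-terms ``that couple through $\m\otimes\p$'' by a Schur complement against the $\m\otimes\p$ diagonal therefore targets blocks that are already zero, and leaves the genuine $\widehat R_{12}$ cross-term unhandled. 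Checking positivity block-by-block, as you propose, does not suffice here.

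The paper's argument avoids this by never trying to kill $\widehat R_{12}$. It first realizes $\g_t$ via the two submersions $(\G\times\K,\,Q+\tfrac1sQ|_\k)\to(\G,Q_t)\to(\G/\H,\g_t)$ and sets $\widehat R:=R_t+3\b(\alpha_1)+3\b(\alpha_2)$; by construction this is a sum of positive-semidefinite operators, hence itself positive-semidefinite on all of $\Lambda^2(\m\oplus\p)$. That global positive-semidefiniteness is what absorbs $\widehat R_{12}$: since $\widehat R_{22}$ is positive-definite (constant-curvature fiber), the kernel of $\widehat R$ restricted to $\Lambda^2\m\oplus\Lambda^2\p$ lies in $\Lambda^2\m$, and one applies Lemma~\ref{lemma:firstorder} with a \emph{small} $\varepsilon_1\eta$ (not $\omega_{\text{base}}$ with coefficient $1$). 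A second application of Lemma~\ref{lemma:firstorder} with small $\varepsilon_2\tau$ handles the $\m\otimes\p$ block; the off-diagonal $\tau$-contribution to $\Lambda^2\m\leftrightarrow\Lambda^2\p$ is harmless precisely because the lemma only requires positivity of $\tau$ on $\ker(\widehat R+\varepsilon_1\eta)\subset\m\otimes\p$. Two further remarks: your modification $3\b(\alpha_{\G/\H})$ is the one for $t=1$ and does not render $R_t$ positive-semidefinite for $t<1$; and your uniformity worry over $(0,1)$ is a non-issue, since for each fixed $t$ one may choose $\varepsilon_1,\varepsilon_2$ depending on $t$.
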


\begin{proof}
The starting point to show that $(\G/\H,\g_t)$ has strongly positive curvature is to compute its curvature operator. This is done with the Riemannian submersions
\begin{equation}\label{eq:setupwallach}
\left(\G\times\K,Q+\tfrac1sQ|_\k\right)\stackrel{\pi_1}{\longrightarrow} (\G,Q_t)\stackrel{\pi_2}{\longrightarrow} (\G/\H,\g_t),
\end{equation}
where $\pi_2$ is the quotient map and $\pi_1$ is of the form \eqref{eq:submcheeger}, corresponding to the fact that $Q_t=t\, Q|_{\k}+Q|_{\m}$, $t<1$, is the result of a Cheeger deformation of $(\G,Q)$ with respect to the $\K$-action by left multiplication. From \eqref{eq:ptct}, it is easy to see that $t=\frac{1}{1+s}$, where $s$ is the parameter of the Cheeger deformation. In particular, large values of $s>0$ correspond to small values of $0<t<1$.

Denote by $\alpha_1$ and $\alpha_2$ the positive-semidefinite operators on $\Lambda^2(\m\oplus\p)$ induced as in \eqref{eq:alpha} by the $A$-tensor of the Riemannian submersions $\pi_1$ and $\pi_2$ respectively. A direct computation shows that, for $X,Y,Z,W\in\m\oplus\p$,
\begin{equation}
\begin{aligned}
\langle\alpha_1(\overline X\wedge\overline  Y),\overline Z\wedge\overline  W\rangle &=\tfrac{1-t}{4}Q\big([X_\m,Y_\m]+t[X_\p,Y_\p],[Z_\m,W_\m]+t[Z_\p,W_\p]\big),\\
\langle\alpha_2(X\wedge Y),Z\wedge W\rangle &=\tfrac{t}{4}Q\big([X,Y]_\h,[Z,W]_\h\big),
\end{aligned}
\end{equation}
where $V_\m$, $V_\p$ and $V_\h$ respectively denote the components of $V\in\mathfrak g$ in $\m$, $\p$ and $\h$. Thus, applying twice formula \eqref{eq:oneill} with the setup \eqref{eq:setupwallach}, we get that the curvature operator $R_t\colon\Lambda^2(\m\oplus\p)\to\Lambda^2(\m\oplus\p)$ of $(\G/\H,\g_t)$ is given by:\footnote{Note that when $t=1$, this is the curvature \eqref{eq:curvhomsp} of the normal homogeneous space $(\G/\H,\g_1)$.}
\begin{align}\label{eq:curvopwallach}
\langle R_t(X\wedge Y),Z\wedge W\rangle_t
&=\tfrac14 Q\big([X_\m+tX_\p,Y_\m+tY_\p],[Z_\m+tZ_\p,W_\m+tW_\p]\big)\nonumber\\
&\quad +\tfrac{t(1-t)^3}{4}Q\big([X_\p,Y_\p],[Z_\p,W_\p]\big)\nonumber\\
&\quad +\tfrac{3(1-t)}{4}Q\big([X_\m,Y_\m]+t[X_\p,Y_\p],[Z_\m,W_\m]+t[Z_\p,W_\p]\big)\\
&\quad +\tfrac{3t}{4}Q\big([X,Y]_\h,[Z,W]_\h\big)\nonumber\\
&\quad -3\b(\alpha_1)(\overline X,\overline Y,\overline Z,\overline W)-3\b(\alpha_2)(X,Y,Z,W).\nonumber
\end{align}
Fix $0<t<1$ and consider the positive-semidefinite operator given by
\begin{equation*}
\widehat R:=R_t+3\b(\alpha_1)+3\b(\alpha_2)\colon \Lambda^2(\m\oplus\p)\longrightarrow\Lambda^2(\m\oplus\p).
\end{equation*}
Then $(\G/\H,\g_t)$ has strongly positive curvature if and only if there exists $\omega\in\Lambda^4(\m\oplus\p)$ such that $\widehat R+\omega$ is positive-definite.

Using the natural decomposition \eqref{eq:lambdas}, we can write $\widehat R$ in blocks as follows:
\begin{equation}\label{eq:rhat}
\bordermatrix{&\Lambda^2\m & &\Lambda^2\p & & \m\otimes\p \cr
               \Lambda^2\m  &\rule[-1.2ex]{0pt}{0pt} \rule{0pt}{2.5ex} \widehat R_{11} & & \widehat R_{12} & & 0\cr
                \Lambda^2\p & \rule[-1.2ex]{0pt}{0pt} \rule{0pt}{2.5ex} \widehat R_{12}^{\,\mathrm{t}} & & \widehat R_{22} & & 0\cr
                \m\otimes\p & 0  \rule[-1.2ex]{0pt}{0pt} \rule{0pt}{2.5ex} & &   0    &   & \widehat R_{33}}
\end{equation}
The zeros above are obtained directly from \eqref{eq:mp} and \eqref{eq:curvopwallach}, using that $[\m,\m]\subset\k$, since $\G/\K$ is a CROSS. Moreover, the self-adjoint positive-semidefinite operators
\begin{equation*}
\widehat R_{11}\colon\Lambda^2\m\longrightarrow\Lambda^2\m,\quad \widehat R_{22}\colon\Lambda^2\p\longrightarrow\Lambda^2\p,\quad \widehat R_{33}\colon\m\otimes\p\longrightarrow\m\otimes\p
\end{equation*}
can be explicitly computed from \eqref{eq:curvopwallach} as follows:
\begin{align*}
\big\langle\widehat R_{11}(X\wedge Y),Z\wedge W\big\rangle_t&=(1-\tfrac{3t}{4})Q\big([X,Y],[Z,W]\big)+\tfrac{3t}{4}Q\big([X,Y]_\h,[Z,W]_\h\big),\\
\big\langle\widehat R_{22}(X\wedge Y),Z\wedge W\big\rangle_t&=\tfrac{t}{4}Q\big([X,Y],[Z,W]\big)+\tfrac{3t}{4}Q\big([X,Y]_\h,[Z,W]_\h\big),\\
\big\langle\widehat R_{33}(X\wedge Y),Z\wedge W\big\rangle_t&=\tfrac{t^2}{4}Q\big([X,Y],[Z,W]\big).
\end{align*}
We now use the hypotheses on $\G/\K$ and $\K/\H$ to relate their curvature operators, given by \eqref{eq:alphahomsp} and \eqref{eq:curvhomsp}, with the above. Since $[\m,\m]\subset\k$, we have that $\alpha_{\G/\K}$ is a multiple of $R_{\G/\K}$ and hence $\b(\alpha_{\G/\K})=0$. Analogously, if $(\k,\h)$ is a symmetric pair, then $[\p,\p]\subset\h$ and hence $\alpha_{\K/\H}$ is a multiple of $R_{\K/\H}$, so $\b(\alpha_{\K/\H})=0$. Else, $\dim \K/\H\leq3$ and hence $\b(\alpha_{\K/\H})\in \Lambda^4\p=\{0\}$. In either case,
\begin{equation*}
\b(\alpha_{\G/\K})=0\quad \text{ and } \quad\b(\alpha_{\K/\H})=0.
\end{equation*}
Consequently, we get the following:
\begin{align*}
\big\langle\widehat R_{11}(X\wedge Y),Z\wedge W\big\rangle_t&=(1-\tfrac{3t}{4})\big\langle R_{\G/\K}(X\wedge Y),Z\wedge W\big\rangle+\tfrac{3t}{4}Q\big([X,Y]_\h,[Z,W]_\h\big),\\
\big\langle\widehat R_{22}(X\wedge Y),Z\wedge W\big\rangle_t&=t\,\big\langle R_{\K/\H}(X\wedge Y),Z\wedge W\big\rangle,\\
\big\langle\widehat R_{33}(X\wedge Y),Z\wedge W\big\rangle_t&=\tfrac{t^2}{4}\big\langle F(X\wedge Y),Z\wedge W\big\rangle,
\end{align*}
where $F$ is related to strong fatness, and given by \eqref{eq:F}.

Let us first analyze the restriction of $\widehat R$ to $\Lambda^2\m\oplus\Lambda^2\p$, i.e., the upper $2\times 2$ block of \eqref{eq:rhat}. Since $\K/\H$ has constant positive curvature, we get that $\widehat R_{22}=R_{\K/\H}$ is positive-definite. Thus, the kernel of the positive-semidefinite operator
\begin{equation}\label{eq:restrictedR}
\widehat R\colon\Lambda^2\m\oplus\Lambda^2\p\longrightarrow\Lambda^2\m\oplus\Lambda^2\p
\end{equation}
must be contained in $\Lambda^2\m$. It follows from Proposition~\ref{prop:cross1} that, since $\G/\K$ is a CROSS different from $\Ca P^2$, there exists $\eta\in\Lambda^4\m$ such that $R_{\G/\K}+\eta$, and hence $\widehat R_{11}+\eta$, is positive-definite. In particular, we have that
\begin{equation*}
\bordermatrix{&\Lambda^2\m & &\Lambda^2\p & & \m\otimes\p \cr
               \Lambda^2\m  & \rule[-1.2ex]{0pt}{0pt} \rule{0pt}{2.5ex} \eta & & 0 & & 0\cr
                \Lambda^2\p & \rule[-1.2ex]{0pt}{0pt} \rule{0pt}{2.5ex} 0& & 0 & & 0\cr
                \m\otimes\p   & \rule[-1.2ex]{0pt}{0pt} \rule{0pt}{2.5ex} 0 & & 0 & & 0}
\end{equation*}
is positive-definite on the kernel of \eqref{eq:restrictedR}. Thus, Lemma~\ref{lemma:firstorder} gives an $\varepsilon_1>0$ such that $\widehat R+\varepsilon_1\eta$ is positive-definite on $\Lambda^2\m\oplus\Lambda^2\p$.

Finally, we analyze the positive-semidefinite operator 
\begin{equation}\label{eq:modifiedR}
\big(\widehat R+\varepsilon_1\eta\big)\colon \Lambda^2(\m\oplus\p)\longrightarrow \Lambda^2(\m\oplus\p).
\end{equation}
Since its restriction to $\Lambda^2\m\oplus\Lambda^2\p$ is positive-definite, its kernel must lie in $\m\otimes\p$. The restriction of \eqref{eq:modifiedR} to this subspace coincides with $\widehat R_{33}=\tfrac{t^2}{4}F$. By strong fatness, there exists $\tau\in\Lambda^2\m\otimes\Lambda^2\p$ such that $F+\tau$ is positive-definite on $\m\otimes\p$. In particular, we have that
\begin{equation*}
\bordermatrix{&\Lambda^2\m & &\Lambda^2\p & & \m\otimes\p \cr
               \Lambda^2\m  & \rule[-1.2ex]{0pt}{0pt} \rule{0pt}{2.5ex} 0 & & \tau & & 0\cr
                \Lambda^2\p & \rule[-1.2ex]{0pt}{0pt} \rule{0pt}{2.5ex} \tau & & 0 & & 0\cr
                \m\otimes\p   & \rule[-1.2ex]{0pt}{0pt} \rule{0pt}{2.5ex} 0 & & 0 & & \tau}
\end{equation*}
is positive-definite on the kernel of \eqref{eq:modifiedR}. Thus, Lemma~\ref{lemma:firstorder} gives an $\varepsilon_2>0$ such that $\widehat R+\varepsilon_1\eta+\varepsilon_2\tau$ is positive-definite on $\Lambda^2(\m\oplus\p)$. In other words, $\omega=\varepsilon_1\eta+\varepsilon_2\tau\in\Lambda^4(\m\oplus\p)$ is such that the modified curvature operator $\widehat R+\omega$ is positive-definite, hence $(\G/\H,\g_t)$ has strongly positive curvature.
\end{proof}

\section{Homogeneous spaces with strongly positive curvature}
\label{sec:homsp}

Homogeneous spaces that admit an invariant metric with $\sec>0$ were classified in even dimensions by Wallach~\cite{wa} and in odd dimensions by B\'erard-Bergery~\cite{bb}, see also \cite{aw,berger}. Apart from the CROSS described in Section~\ref{sec:cross}, other examples only appear in dimensions $6$, $7$, $12$, $13$ and $24$. The complete list is given as follows, see \cite{grove-survey,zillersurvey}.

\begin{theorem}\label{thm:homsp}
Apart from the CROSS, the only simply-connected closed manifolds to admit a homogeneous metric with $\sec>0$ are:
\begin{itemize}
\item[(i)] Wallach flag manifolds: $W^6=\SU(3)/\mathsf T^2$, $W^{12}=\Sp(3)/\Sp(1)\Sp(1)\Sp(1)$ and $W^{24}=\mathrm F_4/\Spin(8)$;
\item[(ii)] Aloff-Wallach spaces: $W^7_{k,\ell}=\SU(3)/\mathsf S^1_{k,\ell}$, $\gcd(k,\ell)=1$, $k\ell (k+\ell)\neq0$;
\item[(iii)] Aloff-Wallach space: $W_{1,1}^7=\SU(3)\SO(3)/\U(2)$;
\item[(iv)] Berger spaces: $B^7=\SO(5)/\SO(3)$ and $B^{13}=\SU(5)/\Sp(2)\cdot\mathsf S^1$.
\end{itemize}
\end{theorem}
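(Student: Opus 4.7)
The statement is the classical Wallach--Bérard-Bergery classification, so my plan is to retrace the original structural argument rather than rediscover it from scratch. The overall scheme is: reduce to the case where $\G$ is compact, connected, simply-connected and acts almost effectively on $\G/\H$; then exploit the fact that positivity of sectional curvature forces very restrictive conditions on the pair $(\G,\H)$; finally carry out an exhaustive but finite case analysis organised by $\dim \G/\H$ and the rank of $\H$.

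The first step, which is common to the even- and odd-dimensional cases, is to show that if $(\G/\H,\g)$ has $\sec>0$ with $\g$ a $\G$-invariant metric, then $\G$ may be assumed simple (or a product with $\SO(3)$ in one exceptional case), using the fact that positive Ricci forces $\G/\H$ to have finite fundamental group and the de Rham decomposition rules out reducibility. The second step is to extract the key algebraic obstruction: if $X,Y\in\m$ satisfy $[X,Y]_{\h}=0$ and $[X,Y]_{\m}=0$, then $\sec(X\wedge Y)=0$, so for $\sec>0$ one must have that for every commuting pair of elements in $\m$ their bracket must be nonzero modulo $\h$. Wallach's ``even-dimensional lemma'' translates this into the assertion $\mathrm{rank}\,\H=\mathrm{rank}\,\G$ in the even-dimensional case, and in the odd-dimensional case Bérard-Bergery extracts an analogous but weaker statement that controls the centralizer of a maximal torus of $\H$ in $\G$.

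Granted these rank restrictions, the plan in even dimensions is to run through the classification of compact simple Lie groups $\G$ and, for each, enumerate the equal-rank connected subgroups $\H\subset\G$, discarding those for which the vanishing bracket obstruction above, together with the stronger flag-manifold positivity analysis, fails. This yields the CROSS $S^{2n}$, $\C P^n$, $\Hr P^n$, $\Ca P^2$, together with the three Wallach flag manifolds $W^6$, $W^{12}$ and $W^{24}$; in the last three cases one verifies by an explicit computation of the curvature operator on a normal homogeneous perturbation that $\sec>0$ is indeed realized. In odd dimensions, the corresponding enumeration — organized by the possible isotropy representations and the structure of the normalizer of a maximal torus of $\H$ — produces the odd spheres, $W^7_{k,\ell}$ (including the $\SU(3)\SO(3)$-exceptional case $W^7_{1,1}$), and the two Berger spaces $B^7$ and $B^{13}$.

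The hard part is not any single step but the sheer bookkeeping of the case analysis, particularly in odd dimensions, where the additional possibility of a non-equal-rank isotropy subgroup opens up many new candidates that must each be ruled out; the exceptional space $W^7_{1,1}=\SU(3)\SO(3)/\U(2)$, whose transitive group is non-simple, shows that one cannot cleanly reduce to the simple case and must handle a genuine edge case separately. A further subtlety is ensuring simple-connectedness of the listed total spaces and eliminating covers and quotients that appear spuriously in the raw Lie-algebraic enumeration. For the purposes of this paper I would not reproduce the full classification but cite \cite{wa,bb,aw,berger,grove-survey,zillersurvey} and restrict verification to confirming that each listed space indeed admits an invariant metric with $\sec>0$, which is straightforward from the Wallach/Aloff-Wallach/Berger constructions.
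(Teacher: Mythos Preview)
The paper does not prove this theorem at all: it is stated as a known classification result and attributed to Wallach~\cite{wa} (even dimensions) and B\'erard-Bergery~\cite{bb} (odd dimensions), with pointers to \cite{aw,berger,grove-survey,zillersurvey}. Your final sentence---cite the references and only verify that each listed space carries an invariant metric with $\sec>0$---is exactly what the paper does, so in that sense your proposal matches. The structural sketch you give of the Wallach--B\'erard-Bergery argument is reasonable background, but it is not part of the paper and is not needed here; note also that your bracket obstruction ``$[X,Y]_\h=0$ and $[X,Y]_\m=0$ implies $\sec(X\wedge Y)=0$'' as stated only applies to normal homogeneous metrics, whereas the classification concerns arbitrary invariant metrics, so if you ever expand the sketch you would need the more delicate version that handles general $\mathrm{Ad}(\H)$-invariant inner products on $\m$.
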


In this section, we study which of the above homogeneous spaces also have strongly positive curvature. All of the above can be shown to have an invariant metric with $\sec>0$ by using Wallach's theorem on homogeneous fibrations, with the exception of the Berger space $B^7$, see \cite[Sec 4]{zillersurvey}. Our strong version of Wallach's theorem (Theorem~\ref{thm:wallach}) has slightly more restrictive hypotheses, preventing it from being applicable to the Wallach flag manifold $W^{24}$, which fibers over $\Ca P^2$. Apart from $W^{24}$, all the remaining examples of homogeneous fibrations can be handled with Theorem~\ref{thm:wallach}. Finally, the exceptional case of $B^7$ is dealt with separately, by directly computing its modified curvature operator. Altogether, we prove:

\begin{theorem}\label{thm:strongposhomsp}
All simply-connected closed homogeneous spaces that admit a homogeneous metric with $\sec>0$ also admit a homogeneous metric with strongly positive curvature, except for $\Ca P^2$ and possibly $W^{24}$.
\end{theorem}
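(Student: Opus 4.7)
The plan is a case-by-case analysis guided by the classification in Theorem~\ref{thm:homsp}. The CROSS cases $S^n$, $\C P^n$, $\Hr P^n$ have already been handled in Section~\ref{sec:cross} via Proposition~\ref{prop:cross1}, and the exclusion of $\Ca P^2$ is Proposition~\ref{prop:cross2}. It therefore remains to treat the sporadic examples: the Wallach flag manifolds $W^6$, $W^{12}$, $W^{24}$, the Aloff-Wallach spaces $W^7_{k,\ell}$ and $W^7_{1,1}$, and the Berger spaces $B^7$ and $B^{13}$.

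For all of these except $W^{24}$ and $B^7$, the strategy is to exhibit the space as the total space of a homogeneous fibration $\K/\H\to\G/\H\to\G/\K$ to which the Strong Wallach Theorem (Theorem~\ref{thm:wallach}) applies. The natural fibrations to consider are $S^2\to W^6\to\C P^2$ via $\K=\mathsf{S}(\U(2)\U(1))\subset\SU(3)$; $S^4\to W^{12}\to\Hr P^2$ via $\K=\Sp(2)\Sp(1)\subset\Sp(3)$; circle bundles $S^1\to W^7_{k,\ell}\to W^6$ and an analogous fibration for $W^7_{1,1}$, arising from intermediate tori; and a $5$-dimensional space-form fibration $\mathsf S^5/\Z_2\to B^{13}\to\C P^4$ obtained from $\K=\mathsf{S}(\U(4)\U(1))\subset\SU(5)$. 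In each case, hypotheses (i) and (ii) of Theorem~\ref{thm:wallach} are readily verified: the base is a CROSS different from $\Ca P^2$ arising from a symmetric pair, and the fiber either has constant positive curvature coming from a symmetric pair, or satisfies $\dim\K/\H\leq 3$.

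The principal technical step is verifying strong fatness (hypothesis (iii) of Theorem~\ref{thm:wallach}). The candidate $4$-form $\tau\in\Lambda^2\m\otimes\Lambda^2\p$ can be constructed from the bracket map $L\colon\m\otimes\p\to\m$, $L(X\wedge Y)=[X,Y]$, as a suitable multiple of the Bianchi-type correction to $L^*L$, mirroring the role played by $3\b(\alpha)$ in the proof of Theorem~\ref{thm:submersions}. Classical fatness of these bundles, which is already required for Wallach's theorem on $\sec>0$, reduces the verification to a positivity check on the finite-dimensional space $\m\otimes\p$ equipped with an explicit basis afforded by root-space decompositions of the ambient Lie algebra. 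I expect the hardest verifications to be for the Aloff-Wallach family and $B^{13}$, where the isotropy representation decomposes into several irreducible summands and the bracket combinatorics on $\m\otimes\p$ is intricate.

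The exceptional case $B^7=\SO(5)/\SO(3)$ is not the total space of any Wallach-type homogeneous fibration and must be handled separately by directly computing its modified curvature operator. Using the fact that the isotropy $\SO(3)$ acts on $T_{(e\H)}B^7$ via its irreducible $7$-dimensional representation on traceless symmetric $3\times 3$ matrices, Proposition~\ref{prop:ginvariance} reduces the problem to finding an $\SO(3)$-invariant $4$-form $\omega$ making $R_{B^7}+\omega$ positive-definite. A character computation shows that the space of $\SO(3)$-invariant elements of $\Lambda^4\m$ is low-dimensional, so the search for $\omega$ becomes a finite-dimensional eigenvalue problem that can be solved in an $\SO(3)$-adapted basis; this computation was carried out by P\"uttmann~\cite{puttmann2} and can be invoked. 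Finally, the reason $W^{24}=\mathsf F_4/\Spin(8)$ is left open is structural: its natural fibration $S^8\to W^{24}\to\Ca P^2$ has $\Ca P^2$ as base, so hypothesis (i) of Theorem~\ref{thm:wallach} fails; moreover, Theorem~\ref{thm:submersions} applied to this fibration rules out strongly positive curvature for any invariant metric realising it as a Riemannian submersion.
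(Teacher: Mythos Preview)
Your overall architecture matches the paper's: CROSS cases via Section~\ref{sec:cross}, the fibered examples via Theorem~\ref{thm:wallach}, $B^7$ by direct computation, and $W^{24}$ left open because its base is $\Ca P^2$. However, there is a genuine gap in your treatment of the Aloff--Wallach spaces.

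The fibration $S^1\to W^7_{k,\ell}\to W^6$ you propose, with $\H=\mathsf S^1_{k,\ell}\subset\K=\mathsf T^2\subset\G=\SU(3)$, \emph{cannot} be fed into Theorem~\ref{thm:wallach}: hypothesis~(i) demands that the base $\G/\K$ be a CROSS, and $W^6=\SU(3)/\mathsf T^2$ is not one (nor is $(\mathfrak{su}(3),\mathfrak t^2)$ a symmetric pair). The paper instead uses the fibration
\[
S^3/\Z_{k+\ell}\longrightarrow W^7_{k,\ell}\longrightarrow \C P^2,
\]
coming from $\mathsf S^1_{k,\ell}\subset\U(2)\subset\SU(3)$. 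Here the base $\C P^2$ is a CROSS with $(\mathfrak{su}(3),\mathfrak u(2))$ symmetric, and the fiber is a $3$-dimensional space form, so hypotheses~(i) and~(ii) hold. Strong fatness is then verified explicitly. Note also that the submersion $W^7_{k,\ell}\to W^6$ runs the wrong way for Theorem~\ref{thm:submersions} to help.

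A secondary point: your plan to produce the $4$-form $\tau$ verifying strong fatness as ``a suitable multiple of the Bianchi-type correction to $L^*L$'' is not how the paper proceeds, and it is not clear this works in general. The paper instead computes $\ker F=\ker L$ explicitly in each case, writes down an $\H$-invariant $\tau\in\Lambda^2\m\otimes\Lambda^2\p$ whose restriction to $\ker F$ is positive-definite (indeed, the identity in most cases), and then invokes Lemma~\ref{lemma:firstorder}. For the Aloff--Wallach family this requires a two-parameter family $\tau_{a,b}$ and a short inequality analysis depending on the slope $k/\ell$.
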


\subsection{\texorpdfstring{Flag manifold $W^6$}{Flag manifold W6}}
The Wallach flag manifold $W^6$ is the total space of a homogeneous fibration \eqref{eq:homfib}, where the Lie groups $\H\subset\K\subset\G$ are 
\begin{equation*}
\mathsf T^2 \subset \U(2)\subset\SU(3).
\end{equation*}
The inclusion $\U(2)\subset\SU(3)$ is given by $\U(2)=\{\diag(A,\det\overline{A})\in\SU(3):A\in\U(2)\}$, and $\mathsf T^2\subset\U(2)$ is the maximal torus formed by diagonal matrices. We thus identify the corresponding homogeneous fibration as
\begin{equation*}
\C P^1\longrightarrow W^6\longrightarrow \C P^2.
\end{equation*}
The base $\C P^2$ is a CROSS and the fiber $\C P^1\cong S^2(\tfrac12)$ has constant positive curvature and dimension $\leq3$. The only remaining hypothesis to apply Theorem~\ref{thm:wallach} is strong fatness, for which we must analyze the appropriate Lie brackets.

The $Q$-orthogonal complement of $\h=\mathfrak t^2$ in $\mathfrak g=\mathfrak{su}(3)$ is parametrized by:
\begin{equation*}
\C^2\oplus \C\ni (z,w)\longmapsto\left(\begin{matrix}
                    0 & w & z_1 \\
                    -\overline{w} &  0 & z_2 \\
                    -\overline{z_1} & -\overline{z_2} & 0
                  \end{matrix}\right)\in \m\oplus\p\subset\mathfrak{su}(3).
\end{equation*}
Let $\{e_i\}$ be a $Q$-orthonormal basis of $\mathfrak{su}(2)$ such that $\{e_1,e_2,e_3,e_4\}$ and $\{e_5,e_6\}$ correspond respectively to the standard basis of $\m\cong\C^2$ and $\p\cong\C$. The linear maps $L$ and hence $F=L^*L$, see \eqref{eq:L} and \eqref{eq:F}, are determined by the following table of Lie brackets:
\begin{equation*}
\begin{array}{c | r r}
[\cdot,\cdot]& e_5\rule[-1.2ex]{0pt}{0pt} \rule{0pt}{2.5ex}  & e_6 \\
\hline
e_1\rule[-1.2ex]{0pt}{0pt} \rule{0pt}{2.5ex}  & e_3 & -e_4 \\
e_2\rule[-1.2ex]{0pt}{0pt} \rule{0pt}{2.5ex}  & e_4 & e_3  \\
e_3\rule[-1.2ex]{0pt}{0pt} \rule{0pt}{2.5ex}  & -e_1 & -e_2 \\
e_4\rule[-1.2ex]{0pt}{0pt} \rule{0pt}{2.5ex}  & -e_2 & e_1  \\
\end{array}
\end{equation*}
This implies that $\ker F=\ker L$ is spanned by the following $4$ vectors of $\m\otimes\p$:
\begin{align*}
e_3\wedge e_5&+e_4\wedge e_6, & -e_3\wedge e_6&+e_4\wedge e_5, & -e_1\wedge e_5&+ e_2\wedge e_6, & e_1\wedge e_6&+e_2\wedge e_5.
\end{align*}
Consider the self-adjoint operator induced by the $\H$-invariant $4$-form $\tau\in\Lambda^2\m\otimes\Lambda^2\p$,
\begin{equation*}
\tau=(e_1\wedge e_2-e_3\wedge e_4)\otimes(e_5\wedge e_6).
\end{equation*}
The restriction $\tau\colon\ker F\to\ker F$ is the identity operator, hence positive-definite. From Lemma~\ref{lemma:firstorder}, there exists $\varepsilon>0$ such that $(F+\varepsilon\tau)\colon \m\otimes\p\to\m\otimes\p$ is positive-definite, proving strong fatness. Thus, by Theorem~\ref{thm:wallach}, the homogeneous space $(W^6,\g_t)$ has strongly positive curvature for all $0<t<1$.

\subsection{\texorpdfstring{Flag manifold $W^{12}$}{Flag manifold W12}}\label{sec:w12}
The Wallach flag manifold $W^{12}$ is the total space of a homogeneous fibration \eqref{eq:homfib}, where the Lie groups $\H\subset\K\subset\G$ are
\begin{equation*}
\Sp(1)\Sp(1)\Sp(1) \subset \Sp(2)\Sp(1)\subset\Sp(3).
\end{equation*}
The inclusions are given by $\Sp(2)\Sp(1)=\{\diag(A,q):A\in\Sp(2),q\in\Sp(1)\}$, and $\Sp(1)\Sp(1)\Sp(1)\subset\Sp(2)\Sp(1)=\{\diag(q_1,q_2,q_3):q_j\in\Sp(1)\}$. We thus identify the corresponding homogeneous fibration as
\begin{equation*}
\Hr P^1\longrightarrow W^{12}\longrightarrow \Hr P^2.
\end{equation*}
The base $\Hr P^2$ is a CROSS, the fiber $\Hr P^1\cong S^4(\tfrac12)$ has constant positive curvature, and $(\k,\h)$ is a symmetric pair. The only remaining hypothesis to apply Theorem~\ref{thm:wallach} is strong fatness, for which we must analyze the appropriate Lie brackets.

The $Q$-orthogonal complement of $\h=\mathfrak{sp}(1)\oplus\mathfrak{sp}(1)\oplus\mathfrak{sp}(1)$ in $\mathfrak g=\mathfrak{sp}(3)$ is:
\begin{equation*}
\Hr^2\oplus \Hr\ni (z,w)\longmapsto\left(\begin{matrix}
                    0 & w & z_1 \\
                    -\overline{w} &  0 & z_2 \\
                    -\overline{z_1} & -\overline{z_2} & 0
                  \end{matrix}\right)\in \m\oplus\p\subset\mathfrak{sp}(3).
\end{equation*}
Let $\{e_i\}$ be a $Q$-orthonormal basis of $\mathfrak{sp}(3)$ such that $\{e_1,\dots,e_8\}$ and $\{e_9,\dots,e_{12}\}$ correspond respectively to the standard basis of $\m\cong\Hr^2$ and $\p\cong\Hr$. The linear maps $L$ and hence $F=L^*L$, see \eqref{eq:L} and \eqref{eq:F}, are determined by the following table of Lie brackets:
\begin{equation*}
\begin{array}{c | r r r r}
[\cdot,\cdot]& e_9\rule[-1.2ex]{0pt}{0pt} \rule{0pt}{2.5ex}  & e_{10} & e_{11} & e_{12} \\
\hline
e_1\rule[-1.2ex]{0pt}{0pt} \rule{0pt}{2.5ex}  & e_5 & -e_6  & -e_7 & -e_8 \\
e_2\rule[-1.2ex]{0pt}{0pt} \rule{0pt}{2.5ex}  & e_6 & e_5  & e_8 & -e_7 \\
e_3\rule[-1.2ex]{0pt}{0pt} \rule{0pt}{2.5ex}  & e_7 & -e_8  & e_5 & e_6 \\
e_4\rule[-1.2ex]{0pt}{0pt} \rule{0pt}{2.5ex}  & e_8 & e_7  & -e_6 & e_5 \\
e_5\rule[-1.2ex]{0pt}{0pt} \rule{0pt}{2.5ex}  & -e_1 & -e_2  & -e_3 & -e_4 \\
e_6\rule[-1.2ex]{0pt}{0pt} \rule{0pt}{2.5ex}  & -e_2 & e_1  & e_4 & -e_3 \\
e_7\rule[-1.2ex]{0pt}{0pt} \rule{0pt}{2.5ex}  & -e_3 & -e_4  & e_1 & e_2 \\
e_8\rule[-1.2ex]{0pt}{0pt} \rule{0pt}{2.5ex}  & -e_4 & e_3  & -e_2 & e_1 \\
\end{array}
\end{equation*}
This implies that $\ker F=\ker L$ is spanned by the following $24$ vectors of $\m\otimes\p$:
\begin{align*}
e_5\wedge e_9&+e_8\wedge e_{12}, & -e_5\wedge e_{10}&+e_8\wedge e_{11}, & e_5\wedge e_{11}&+ e_8\wedge e_{10}, \\
-e_5\wedge e_{12}&+e_8\wedge e_9, & e_5\wedge e_{10}&+e_7\wedge e_{12}, & e_5\wedge e_9&+e_7\wedge e_{11}, \\
-e_5\wedge e_{12}&+ e_7\wedge e_{10}, & -e_5\wedge e_{11}&+e_7\wedge e_9, & -e_5\wedge e_{11}&+e_6\wedge e_{12}, \\
e_5\wedge e_{12}&+e_6\wedge e_{11}, & e_5\wedge e_9&+ e_6\wedge e_{10}, & -e_5\wedge e_{10}&+e_6\wedge e_9,\\
-e_1\wedge e_9&+e_4\wedge e_{12}, & -e_1\wedge e_{10}&+e_4\wedge e_{11}, & e_1\wedge e_{11}&+ e_4\wedge e_{10}, \\
e_1\wedge e_{12}&+e_4\wedge e_9, & e_1\wedge e_{10}&+e_3\wedge e_{12}, & -e_1\wedge e_9&+e_3\wedge e_{11}, \\
-e_1\wedge e_{12}&+ e_3\wedge e_{10}, & e_1\wedge e_{11}&+e_3\wedge e_9, & -e_1\wedge e_{11}&+e_2\wedge e_{12}, \\
e_1\wedge e_{12}&+e_2\wedge e_{11}, & -e_1\wedge e_9&+ e_2\wedge e_{10}, & e_1\wedge e_{10}&+e_2\wedge e_9.
\end{align*}
Consider the self-adjoint operator induced by the $\H$-invariant $4$-form $\tau\in\Lambda^2\m\otimes\Lambda^2\p$,
\begin{align*}
\tau&=(e_5\wedge e_6+e_7\wedge e_8)\otimes(e_{11}\wedge e_{12}-e_9\wedge e_{10}) \\
&\quad+(e_6\wedge e_8-e_5\wedge e_7)\otimes(e_{9}\wedge e_{11}+e_{10}\wedge e_{12})\\
&\quad +(e_5\wedge e_8+e_6\wedge e_7)\otimes(e_{10}\wedge e_{11}-e_{9}\wedge e_{12})\\
&\quad+(e_1\wedge e_2+e_3\wedge e_4)\otimes(e_9\wedge e_{10}+e_{11}\wedge e_{12})\\
&\quad +(e_1\wedge e_3-e_2\wedge e_4)\otimes(e_9\wedge e_{11}-e_{10}\wedge e_{12})\\
&\quad+(e_1\wedge e_4+e_2\wedge e_3)\otimes(e_9\wedge e_{12}+e_{10}\wedge e_{11}).
\end{align*}
The restriction $\tau\colon\ker F\to\ker F$ is the identity operator, hence positive-definite. From Lemma~\ref{lemma:firstorder}, there exists $\varepsilon>0$ such that $(F+\varepsilon\tau)\colon \m\otimes\p\to\m\otimes\p$ is positive-definite, proving strong fatness. Thus, by Theorem~\ref{thm:wallach}, the homogeneous space $(W^{24},\g_t)$ has strongly positive curvature for all $0<t<1$.

\subsection{\texorpdfstring{Aloff-Wallach spaces $W^7_{k,\ell}$}{Aloff-Wallach spaces W7kl}}
Consider the Lie groups $\H\subset\K\subset\G$ given by
\begin{equation*}
\mathsf S^1_{k,\ell} \subset \U(2)\subset\SU(3).
\end{equation*}
The inclusion $\U(2)\subset\SU(3)$ is the same as in the flag manifold $W^6$, and $\mathsf S^1_{k,\ell}=\{\diag(z^k,z^\ell,\overline{z}^{k+\ell}):z\in S^1\}$ is a circle with slope $(k,\ell)$ inside the maximal torus of $\U(2)$. Up to the appropriate equivalences, the nontrivial cases correspond to $0<k\leq\ell$ and $\gcd(k,\ell)=1$. The above groups induce the homogeneous fibration
\begin{equation*}
S^3/\Z_{k+\ell}\longrightarrow W^7_{k,\ell}\longrightarrow \C P^2.
\end{equation*}
The base $\C P^2$ is a CROSS and the fiber $S^3/\Z_{k+\ell}$ has constant positive curvature and dimension $\leq3$. The only remaining hypothesis to apply Theorem~\ref{thm:wallach} is strong fatness, for which we must analyze the appropriate Lie brackets.

For convenience of notation, set $r\in(0,1]$ and $s\in (1,3]$ to be the numbers
\begin{equation*}
r:=k/\ell \quad \text{ and } \quad s:=1+r+r^2.
\end{equation*}
The $Q$-orthogonal complement of the Lie algebra of $\mathsf S^1_{k,\ell}$ in $\mathfrak{su}(3)$ is:
\begin{equation*}
\C^2\oplus (\R\oplus\C)\ni (z,x,w)\longmapsto\left(\begin{matrix}
                    \frac{(2+r)i}{\sqrt{3s}} & w & z_1 \\
                    -\overline{w} &  -\frac{(2r+1)i}{\sqrt{3s}} & z_2 \\
                    -\overline{z_1} & -\overline{z_2} & \frac{(r-1)i}{\sqrt{3s}}
                  \end{matrix}\right)\in \m\oplus\p\subset\mathfrak{su}(3).
\end{equation*}
Let $\{e_i\}$ be a $Q$-orthonormal basis of $\mathfrak{su}(3)$ such that $\{e_1,e_2,e_3,e_4\}$ and $\{e_5,e_6,e_7\}$ correspond respectively to the standard basis of $\m\cong\C^2$ and $\p\cong\R\oplus\C$. The linear maps $L$ and hence $F=L^*L$, see \eqref{eq:L} and \eqref{eq:F}, are determined by the following table of Lie brackets:
\begin{equation*}
\begin{array}{c | r r r}
[\cdot,\cdot]& e_5\rule[-1.2ex]{0pt}{0pt} \rule{0pt}{2.5ex}  & e_6 & e_7 \\
\hline
e_1\rule[-1.2ex]{0pt}{0pt} \rule{0pt}{2.5ex}  & -\sqrt\frac{3}{s}e_2 & e_3 & -e_4 \\
e_2\rule[-1.2ex]{0pt}{0pt} \rule{0pt}{2.5ex}  & \sqrt\frac{3}{s}e_1 & e_4 & e_3  \\
e_3\rule[-1.2ex]{0pt}{0pt} \rule{0pt}{2.5ex}  & r\sqrt\frac{3}{s}e_4& -e_1 & -e_2 \\
e_4\rule[-1.2ex]{0pt}{0pt} \rule{0pt}{2.5ex}  & -r\sqrt\frac{3}{s}e_3 & -e_2 & e_1  \\
\end{array}
\end{equation*}
This implies that $\ker F=\ker L$ is spanned by the following $8$ vectors of $\m\otimes\p$:
\begin{align*}
-\sqrt{\tfrac{s}{3}}e_2\wedge e_5&+e_4\wedge e_7, & -\sqrt{\tfrac{s}{3}}e_1\wedge e_5&+e_4\wedge e_6, & r\sqrt{\tfrac{3}{s}}e_1\wedge e_6&+ e_4\wedge e_5,\\
-\sqrt{\tfrac{s}{3}}e_1\wedge e_5&+e_3\wedge e_7 & \sqrt{\tfrac{s}{3}}e_2\wedge e_5&+e_3\wedge e_6, & r\sqrt{\tfrac{3}{s}}e_1\wedge e_7&+e_3\wedge e_5,\\
-e_1\wedge e_6&+e_2\wedge e_7, & e_1\wedge e_7&+e_2\wedge e_6.&&
\end{align*}
Consider the operator induced by the $\H$-invariant $4$-form $\tau_{a,b}\in\Lambda^2\m\otimes\Lambda^2\p$,
\begin{align*}
\tau_{a,b}&=(a\,e_1\wedge e_2+b\,e_3\wedge e_4)\otimes(e_6\wedge e_7)+\sqrt3(e_1\wedge e_3+e_2\wedge e_4)\otimes(e_5\wedge e_7)\\
&\quad +\sqrt3(e_1\wedge e_4-e_2\wedge e_3)\otimes(e_5\wedge e_6)
\end{align*}
The restriction $\tau_{a,b}\colon\ker F\to\ker F$ is positive-definite if and only if
\begin{equation*}
a>0, \quad a-\frac{r}{4\sqrt{s}}a^2>0, \quad b+\frac{1}{4\sqrt{s}}b^2<0.
\end{equation*}
Since $r>0$ and $s>0$, there exist $a>0$ and $b<0$ sufficiently small such that $\tau_{a,b}\colon\ker F\to\ker F$ is positive-definite, proving strong fatness. Thus, by Theorem~\ref{thm:wallach}, all the homogeneous spaces $(W^{7}_{k,\ell},\g_t)$, $k\ell(k+\ell)\neq0$, have strongly positive curvature for all $0<t<1$.

\subsection{\texorpdfstring{Berger space $B^7$}{Berger space B7}}\label{subsec:b7}
Differently from all the previous examples, the Berger space $B^7=\SO(5)/\SO(3)$ does not admit a homogeneous fibration. The inclusion $\SO(3)\subset\SO(5)$ comes from the conjugation action of $\SO(3)$ on the space of symmetric traceless $3\times3$ matrices, which is identified with $\R^5$. Let $Q(X,Y)=-\frac{1}{10}\Re\operatorname{tr}(XY)$ be the bi-invariant metric on $\mathfrak{so}(5)$ and $\mathfrak{so}(5)=\mathfrak{so}(3)\oplus\m$ be a $Q$-orthogonal splitting. The isotropy action of $\SO(3)$ on $\m$ is irreducible, hence there is a unique $\SO(5)$-invariant metric, up to homotheties, which is known to have $\sec>0$. Denote by $\g$ this normal homogeneous metric corresponding to $Q|_\m$.

In order to prove that $(B^7,\g)$ has strongly positive curvature, we explicitly compute the positive-semidefinite operator $\widehat R=R_{\G/\H}+3\b(\alpha_{\G/\H})$, given by
\begin{equation}\label{eq:rhatb7}
\big\langle \widehat R(X\wedge Y),Z\wedge W\big\rangle =\tfrac14Q([X,Y],[Z,W])+\tfrac34Q\big([X,Y]_\h,[Z,W]_\h\big),
\end{equation}
see Example~\ref{ex:homsp}. It is clear that $(B^7,\g)$ has strongly positive curvature if and only if there exists $\omega\in\Lambda^4\m$ such that $\widehat R+\omega$ is positive-definite. The subspace $\m\subset\mathfrak{so}(5)$ can be parametrized as follows.
\begin{equation*}
\m=\left\{
\left(\begin{smallmatrix}
                    0\rule[-1.2ex]{0pt}{0pt} \rule{0pt}{2.5ex} & \sqrt{5}x_7 & \sqrt{2}x_1 & \sqrt{5}x_6 & \sqrt{2}x_4 \\
                  -\sqrt{5}x_7\rule[-1.2ex]{0pt}{0pt} \rule{0pt}{2.5ex} &0 & -\sqrt{\frac{3}{2}}x_1+\sqrt{\frac{5}{2}}x_2 & x_3 & \sqrt{\frac{3}{2}}x_4+\sqrt{\frac{5}{2}}x_5\\
                   -\sqrt{2}x_1\rule[-1.2ex]{0pt}{0pt} \rule{0pt}{2.5ex} &\sqrt{\frac{3}{2}}x_1-\sqrt{\frac{5}{2}}x_2 & 0 & \sqrt{\frac{3}{2}}x_4-\sqrt{\frac{5}{2}}x_5& -2x_3 \\
                   -\sqrt{5}x_6\rule[-1.2ex]{0pt}{0pt} \rule{0pt}{2.5ex} & -x_3 &  -\sqrt{\frac{3}{2}}x_4+\sqrt{\frac{5}{2}}x_5 & 0 &  -\sqrt{\frac{3}{2}}x_1-\sqrt{\frac{5}{2}}x_2 \\
                   -\sqrt{2}x_4 &  -\sqrt{\frac{3}{2}}x_4-\sqrt{\frac{5}{2}}x_5 \rule[-1.2ex]{0pt}{0pt} \rule{0pt}{2.5ex} & 2x_3 &  \sqrt{\frac{3}{2}}x_1+\sqrt{\frac{5}{2}}x_2 & 0 \\
                  \end{smallmatrix}\right): x\in\R^7\right\}
\end{equation*}
Let $\{e_i\}$ be a $Q$-orthonormal basis of $\mathfrak{so}(5)$ such that $\{e_1,\dots,e_7\}$ correspond to the standard basis of $\m\cong\R^7$. The Lie brackets of elements of $\m$ are given below.
\begin{equation*}
\begin{array}{c | llllll}
[\cdot,\cdot] \rule[-1.2ex]{0pt}{0pt} \rule{0pt}{2.5ex}  & e_{2} & e_{3} & e_{4} & e_{5} &e_6 & e_7 \\
\hline
e_1\rule[-1.2ex]{0pt}{0pt} \rule{0pt}{2.5ex}  & e_7 & e_4+\sqrt6e_{10} &-e_3-e_9 & -e_6 & e_5-\sqrt{\tfrac52}e_{10} & e_2+\sqrt{\tfrac52}e_8 \\
e_2\rule[-1.2ex]{0pt}{0pt} \rule{0pt}{2.5ex}  & & -e_5 & -e_6 & e_3-3e_9 & e_4-\sqrt{\tfrac32}e_{10} & -e_1-\sqrt{\tfrac32}e_8 \\
e_3\rule[-1.2ex]{0pt}{0pt} \rule{0pt}{2.5ex}  & & &  e_1-\sqrt6e_8 &-e_2 & e_7 & -e_6 \\
e_4\rule[-1.2ex]{0pt}{0pt} \rule{0pt}{2.5ex}  & & & & -e_7 & -e_2+\sqrt{\tfrac52}e_8 & e_5+\sqrt{\tfrac52}e_{10}\\
e_5\rule[-1.2ex]{0pt}{0pt} \rule{0pt}{2.5ex}  & & & & &-e_1-\sqrt{\tfrac32}e_8 & -e_4+\sqrt{\tfrac32}e_{10} \\
e_6\rule[-1.2ex]{0pt}{0pt} \rule{0pt}{2.5ex}  & & & & & & e_3+2e_9\\
\end{array}
\end{equation*}
From \eqref{eq:rhatb7}, the kernel of $\widehat R\colon\Lambda^2\m\to\Lambda^2\m$ is spanned by the following $11$ vectors:
\begin{align*}
&e_4\wedge e_5+e_6\wedge e_9, &e_4\wedge e_6&+3\sqrt{\tfrac35}(e_4\wedge e_9+e_5\wedge e_6)-e_5\wedge e_{9},\\
&e_4\wedge e_5-e_7\wedge e_8, &e_4\wedge e_6&+3\sqrt{\tfrac35}(e_4\wedge e_9+e_5\wedge e_6)+e_8\wedge e_{10},\\
&e_4\wedge e_8-e_5\wedge e_{7}, &e_4\wedge e_{10}&+\tfrac13\sqrt{\tfrac53}(e_5\wedge e_{10}+e_6\wedge e_{7})+e_6\wedge e_8,\\
&e_4\wedge e_8-e_6\wedge e_{10}, &e_4\wedge e_{10}&+\tfrac23\sqrt{\tfrac53}(e_5\wedge e_{10}+e_6\wedge e_{7})+e_7\wedge e_9,\\
&e_5\wedge e_{10}-e_8\wedge e_9, &e_4\wedge e_9&+2e_5\wedge e_6 +e_7\wedge e_{10},\\
&\tfrac54e_4\wedge e_7+\tfrac14e_5\wedge e_8+e_9\wedge e_{10}. 
\end{align*}
Consider the self-adjoint operator induced by the $\H$-invariant $4$-form $\omega\in\Lambda^4\m$,
\begin{align*}
\omega&=e_1\wedge e_2\wedge (e_3\wedge e_6-e_4\wedge e_5)+(e_1\wedge e_4-e_2\wedge e_5)\wedge e_6\wedge e_7\\
&\quad+ e_1\wedge e_3\wedge e_5\wedge e_7+ e_2\wedge e_3\wedge e_4\wedge e_7+ e_3\wedge e_4\wedge e_5\wedge e_6.
\end{align*}
The restriction $\omega\colon\ker\widehat R\to\ker\widehat R$ is the identity operator, hence positive-definite. From Lemma~\ref{lemma:firstorder}, there exists $\varepsilon>0$ such that $(\widehat R+\varepsilon\omega)\colon \Lambda^2\m\to\Lambda^2\m$ is positive-definite, proving that the homogeneous space $(B^{7},\g)$ has strongly positive curvature.

\subsection{\texorpdfstring{Berger space $B^{13}$}{Berger space B13}}
The Berger space $B^{13}$ is the total space of a homogeneous fibration \eqref{eq:homfib}, where the Lie groups $\H\subset\K\subset\G$ are
\begin{equation*}
\Sp(2)\cdot\mathsf S^1 \subset \U(4)\subset\SU(5).
\end{equation*}
The inclusion $\U(4)\subset\SU(5)$ is given by $\U(4)=\{\diag(A,\det\overline{A})\in \SU(5):A\in\U(4)\}$. The inclusion $\Sp(2)\subset\U(4)$ is the usual one, and $\mathsf S^1\subset\U(4)$ is its center, formed by multiples of the identity matrix. We thus identify the corresponding homogeneous fibration as
\begin{equation*}
\R P^5\longrightarrow B^{13}\longrightarrow \C P^4.
\end{equation*}
The base $\C P^4$ is a CROSS, the fiber $\R P^5$ has constant positive curvature, and $(\k,\h)$ is a symmetric pair. The only remaining hypothesis to apply Theorem~\ref{thm:wallach} is strong fatness, for which we must analyze the appropriate Lie brackets.

The $Q$-orthogonal complement of $\h=\mathfrak{sp}(2)\oplus\R$ in $\mathfrak g=\mathfrak{su}(5)$ is:
\begin{equation*}
\C^4\oplus (\R\oplus\C^2)\ni (z,x,w)\longmapsto\left(\begin{matrix}
                    \frac{xi}{\sqrt{2}}\rule[-1.2ex]{0pt}{0pt} \rule{0pt}{2.5ex} & 0 & \frac{w_1}{\sqrt2} & \frac{w_2}{\sqrt2} & z_1 \\
                    0\rule[-1.2ex]{0pt}{0pt} \rule{0pt}{2.5ex} & \frac{xi}{\sqrt{2}} & \frac{\overline{w_2}}{\sqrt2} & -\frac{\overline{w_1}}{\sqrt2} & z_2 \\
                    -\frac{\overline{w_1}}{\sqrt{2}}\rule[-1.2ex]{0pt}{0pt} \rule{0pt}{2.5ex} & -\frac{w_2}{\sqrt2} & -\frac{xi}{\sqrt{2}} & 0 & z_3 \\
                    -\frac{\overline{w_2}}{\sqrt{2}}\rule[-1.2ex]{0pt}{0pt} \rule{0pt}{2.5ex} & \frac{w_1}{\sqrt{2}} & 0 & -\frac{xi}{\sqrt{2}} & z_4 \\
                    -\overline{z_1} & -\overline{z_2}\rule[-1.2ex]{0pt}{0pt} \rule{0pt}{2.5ex} & -\overline{z_3} & -\overline{z_4} & 0 \\
                  \end{matrix}\right)\in \m\oplus\p\subset\mathfrak{su}(5).
\end{equation*}
Let $\{e_i\}$ be a $Q$-orthonormal basis of $\mathfrak{su}(3)$ such that $\{e_1,\dots,e_8\}$ and $\{e_9,\dots,e_{13}\}$ correspond respectively to the standard basis of $\m\cong\C^4$ and $\p\cong\R\oplus\C^2$. The linear maps $L$ and hence $F=L^*L$, see \eqref{eq:L} and \eqref{eq:F}, are determined by the following table of Lie brackets, which are rescaled by $\sqrt2$ for convenience:
\begin{equation*}
\begin{array}{c | r r r r r}
\sqrt2\,[\cdot,\cdot]& e_9\rule[-1.2ex]{0pt}{0pt} \rule{0pt}{2.5ex}  & e_{10} & e_{11} & e_{12} & e_{13} \\
\hline
e_1\rule[-1.2ex]{0pt}{0pt} \rule{0pt}{2.5ex}  & -e_2 & e_5  & -e_6 & e_7 & -e_8 \\
e_2\rule[-1.2ex]{0pt}{0pt} \rule{0pt}{2.5ex}  & e_1 & e_6  & e_5 & e_8 & e_7\\
e_3\rule[-1.2ex]{0pt}{0pt} \rule{0pt}{2.5ex}  & -e_4 & -e_9  & -e_{10} & e_5 & e_6\\
e_4\rule[-1.2ex]{0pt}{0pt} \rule{0pt}{2.5ex}  & e_3 & -e_8  & e_7 & e_6 & -e_5\\
e_5\rule[-1.2ex]{0pt}{0pt} \rule{0pt}{2.5ex}  & e_6 & -e_1  & -e_2 & -e_3 & e_4\\
e_6\rule[-1.2ex]{0pt}{0pt} \rule{0pt}{2.5ex}  & -e_5 & -e_2  & e_1 & -e_4 & -e_3\\
e_7\rule[-1.2ex]{0pt}{0pt} \rule{0pt}{2.5ex}  & e_8 & e_3  & -e_4 & -e_1 & -e_2\\
e_8\rule[-1.2ex]{0pt}{0pt} \rule{0pt}{2.5ex}  & -e_7 & e_4  & e_3 & -e_2 & e_1\\
\end{array}
\end{equation*}
This implies that $\ker F=\ker L$ is spanned by the following $32$ vectors of $\m\otimes\p$:
\begin{align*}
-e_2\wedge e_9&+e_8\wedge e_{13}, & -e_1\wedge e_{9}&+e_8\wedge e_{12}, & -e_4\wedge e_{9}&+ e_8\wedge e_{11}, \\
 e_3\wedge e_{9}&+e_8\wedge e_{10}, & e_1\wedge e_{12}&+e_8\wedge e_{9}, & -e_1\wedge e_9&+e_7\wedge e_{13}, \\
 e_2\wedge e_{9}&+ e_7\wedge e_{12}, & -e_3\wedge e_{9}&+e_7\wedge e_{11}, & -e_4\wedge e_{9}&+e_7\wedge e_{10}, \\
 e_1\wedge e_{13}&+e_7\wedge e_{9}, & e_4\wedge e_9&+ e_6\wedge e_{13}, & -e_3\wedge e_{9}&+e_6\wedge e_{12},\\
-e_2\wedge e_9&+e_6\wedge e_{11}, & -e_1\wedge e_{9}&+e_6\wedge e_{10}, & e_1\wedge e_{10}&+ e_6\wedge e_{9},\\
e_3\wedge e_{9}&+e_5\wedge e_{13}, & e_4\wedge e_{9}&+e_5\wedge e_{12}, & -e_1\wedge e_9&+e_5\wedge e_{11}, \\
e_2\wedge e_{9}&+ e_5\wedge e_{10}, & e_1\wedge e_{11}&+e_5\wedge e_9, & e_1\wedge e_{10}&+e_4\wedge e_{13}, \\
e_1\wedge e_{11}&+e_4\wedge e_{12}, & -e_1\wedge e_{12}&+ e_4\wedge e_{11}, & -e_1\wedge e_{13}&+e_4\wedge e_{10},\\
e_1\wedge e_{11}&+e_3\wedge e_{13}, & -e_1\wedge e_{10}&+e_3\wedge e_{12}, & -e_1\wedge e_{13}&+ e_3\wedge e_{11}, \\
e_1\wedge e_{12}&+e_3\wedge e_{10}, & -e_1\wedge e_{12}&+e_2\wedge e_{13}, & e_1\wedge e_{13}&+e_2\wedge e_{12}, \\
-e_1\wedge e_{10}&+ e_2\wedge e_{11}, & e_1\wedge e_{11}&+e_2\wedge e_{10}. & &
\end{align*}
Consider the self-adjoint operator induced by the $\H$-invariant $4$-form $\tau\in\Lambda^2\m\otimes\Lambda^2\p$,
\begin{align*}
\tau&=-(e_3\wedge e_4-e_1\wedge e_2)\otimes(e_{10}\wedge e_{11}+e_{12}\wedge e_{13})\\
&\quad +(e_1\wedge e_3+e_2\wedge e_4)\otimes(e_{10}\wedge e_{12}-e_{11}\wedge e_{13})\\
&\quad -(e_1\wedge e_4-e_2\wedge e_3)\otimes(e_{10}\wedge e_{13}+e_{11}\wedge e_{12})\\
&\quad -(e_5\wedge e_6-e_7\wedge e_8)\otimes(e_{10}\wedge e_{11}-e_{12}\wedge e_{13})\\
&\quad +(e_5\wedge e_7+e_6\wedge e_8)\otimes(e_{10}\wedge e_{12}+e_{11}\wedge e_{13})\\
&\quad -(e_5\wedge e_8-e_6\wedge e_7)\otimes(e_{10}\wedge e_{13}-e_{11}\wedge e_{12})\\
&\quad +(e_1\wedge e_6-e_2\wedge e_5-e_3\wedge e_8+e_4\wedge e_7)\otimes (e_9\wedge e_{10})\\
&\quad +(e_1\wedge e_5+e_2\wedge e_6+e_3\wedge e_7+e_4\wedge e_8)\otimes (e_9\wedge e_{11})\\
&\quad +(e_1\wedge e_8-e_2\wedge e_7+e_3\wedge e_6-e_4\wedge e_5)\otimes (e_9\wedge e_{12})\\
&\quad +(e_1\wedge e_7+e_2\wedge e_8-e_3\wedge e_5-e_4\wedge e_6)\otimes (e_9\wedge e_{13}).
\end{align*}
The restriction $\tau\colon\ker F\to\ker F$ is the identity operator, hence positive-definite. From Lemma~\ref{lemma:firstorder}, there exists $\varepsilon>0$ such that $(F+\varepsilon\tau)\colon \m\otimes\p\to\m\otimes\p$ is positive-definite, proving strong fatness. Thus, by Theorem~\ref{thm:wallach}, the homogeneous space $(B^{13},\g_t)$ has strongly positive curvature for all $0<t<1$.

\smallskip

The above concludes the proof of Theorem~\ref{thm:strongposhomsp}, and hence of Theorem~\ref{mainthm:homspaces}.\qed

\begin{remark}
It was previously observed by P\"uttmann~\cite{puttmann,puttmann2} that $W^6$, $W^7_{1,1}$ and $B^7$ have strongly positive curvature, by directly computing their modified curvature operators. These computations were used to study optimal pinching constants of these homogeneous spaces. Although our method using Theorem~\ref{thm:wallach} is computationally simpler, it also provides less information. In particular, it does not allow to compute extremal sectional curvatures.
\end{remark}

\begin{remark}\label{rem:b13}
The normal homogeneous metric (corresponding to $t=1$) on the Berger space $B^{13}$ is known to have $\sec>0$ \cite{berger}. A direct computation of its modified curvature operator shows that this metric does not have strongly positive curvature, see also Remark~\ref{rem:zoltek} and Appendix~\ref{sec:berger}.
\end{remark}

\begin{remark}
Some of the above homogeneous spaces are related via submersions or totally geodesic immersions, allowing for alternative proofs that these spaces have strongly positive curvature. For instance, there is an embedding $W^6\to W^{12}$ whose image is the fixed-point set of an isometry, hence totally geodesic. In particular, it follows from Proposition~\ref{prop:immersions} that since $W^{12}$ has strongly positive curvature, so does $W^6$. Analogously, by the Taimanov embedding $W^7_{1,1}\to B^{13}$, the Aloff-Wallach space $W^7_{1,1}$ has strongly positive curvature since $B^{13}$ does. Finally, there are Riemannian submersions $W^7_{k,\ell}\to W^6$, so Theorem~\ref{thm:submersions} provides yet another proof that $W^6$ has strongly positive curvature.
\end{remark}

\section{Some Remarks and Open Questions}
\label{sec:openq}

The study of strongly positive curvature naturally leads to many interesting questions that are not addressed in the present paper. As a general principle, any question on manifolds with $\sec>0$ can be reformulated for manifolds with strongly positive curvature. In many cases, this yields a relevant problem, such as Problem~\ref{prob:stronghopf}. In dimension $4$, it gives an alternative viewpoint on problems about $\sec>0$, such as the Hopf question on $S^2\times S^2$, recall Proposition~\ref{prop:dim4}. In this section, we compile a few open questions in addition to these, organized under four common themes, commenting on how they relate to the literature on the subject.

\subsection{Topological obstructions}
There are notoriously few known topological obstructions to $\sec>0$, see \cite{zillersurvey}. On the other hand, manifolds with positive curvature operator $R>0$ are extremely rigid, in that their universal cover must be diffeomorphic to a sphere \cite{bw}. Since strongly positive curvature is an intermediate condition between $\sec>0$ and $R>0$, it is particularly relevant to ask:

\begin{problem}\label{prob:obstructions}
Are there topological obstructions to strongly positive curvature beyond those known for $\sec>0$?
\end{problem}

One tool to find topological obstructions to curvature conditions is Hamilton's Ricci flow. In particular, Ricci flow was used by B\"ohm and Wilking~\cite{bw} in the classification of manifolds with $R>0$, who proved that this condition is preserved under the flow. In earlier work, B\"ohm and Wilking~\cite{bw-ricci} provided the first example of a closed manifold with $\sec>0$ that develops mixed Ricci curvature under the Ricci flow. This example is the Wallach flag manifold $(W^{12},\g_*)$ with a homogeneous metric $\g_*$ that can be chosen \emph{arbitrarily close} to a metric of the form $\g_t$, discussed in Subsection~\ref{sec:w12}. In particular, $\g_*$ can be chosen to have strongly positive curvature, since this is an open condition. Therefore, \emph{strongly positive curvature is not preserved under the Ricci flow}.

Another natural attempt to find topological obstructions to strongly positive curvature is related to the Gauss-Bonnet integrand of a curvature operator, and the so-called \emph{algebraic Hopf conjecture}. Given an algebraic curvature operator $R\colon\Lambda^2 V\to\Lambda^2V$, $\dim V=2n$, its \emph{Gauss-Bonnet integrand} $\chi(R)$ is given by:
\begin{equation*}
\chi(R)=\sum_{\sigma,\tau\in \mathsf S_{2n}} \operatorname{sgn}(\sigma)\operatorname{sgn}(\tau)\prod_{i=1}^{2n-1} \big\langle R(e_{\sigma(i)}\wedge e_{\sigma(i+1)}),e_{\tau(i)}\wedge e_{\tau(i+1)}\big\rangle,
\end{equation*}
where $\mathsf S_{2n}$ is the group of permutations of $2n$ symbols, $\operatorname{sgn}(\sigma)$ denotes the sign of the permutation $\sigma$ and $\{e_i\}$ is an orthonormal basis of $V$. It has been long known that if $R>0$, then $\chi(R)>0$, see \cite[p. 191]{kulk}.
By the Chern-Gauss-Bonnet Theorem, the integral over a closed manifold $(M^{2n},\g)$ of $\chi(R_p)$ equal to the Euler characteristic $\chi(M)$, multiplied by a (positive) dimensional constant. A classic conjecture of Hopf asks whether even dimensional closed manifolds with $\sec>0$ have positive Euler characteristic. This conjecture remains open in general, whereas its \emph{algebraic} variant, that asks if an algebraic curvature operator $R$ in even dimensions with $\sec_R>0$ (recall \eqref{eq:secr}) has $\chi(R)>0$, is completely settled. Milnor proved in unpublished work that the algebraic Hopf conjecture holds in dimensions $\leq4$, see \cite{chern}. In particular, in light of Proposition~\ref{prop:dim4}, this shows that curvature operators in dimensions $\leq4$ that have strongly positive curvature also have $\chi(R)>0$. Geroch~\cite{ger}, and later Klembeck~\cite{klem}, provided counter-examples to the algebraic Hopf conjecture in dimensions $\geq6$, which are algebraic curvature operators in even dimensions $\geq6$ that have $\sec_R>0$ but $\chi(R)\leq0$. It is not hard to verify that some of these examples \emph{also have strongly positive curvature}, hence are also counter-examples to the strong version of the algebraic Hopf conjecture. Nevertheless, the following is completely open in $\dim\geq6$:

\begin{problem}\label{prob:stronghopf}
Does every closed manifold with even dimension and strongly positive curvature have $\chi(M)>0$?
\end{problem}

\subsection{Examples}
One of the largest problems in the study of manifolds with $\sec>0$ is the lack of examples. The only examples different from spheres and projective spaces currently known occur in dimensions $\leq24$ and have many symmetries, in the spirit of Grove's program~\cite{grove-survey}.

In the above sections, we proved that, except for $\Ca P^2=\mathsf F_4/\Spin(9)$ and possibly $W^{24}=\mathsf F_4/\Spin(8)$, the remaining closed simply-connected homogeneous spaces with $\sec>0$ have strongly positive curvature. We now make some remarks on these exceptional cases $\Ca P^2$ and $W^{24}$. Since the only $\mathsf F_4$-invariant metric on $\Ca P^2$ is the metric that makes $\Ca P^2$ a symmetric space, Proposition~\ref{prop:cross2} implies that this manifold does not admit homogeneous metrics with strongly positive curvature. Nevertheless, the following remains open:

\begin{problem}
Are there any (non-homogeneous) metrics on $\Ca P^2$ with strongly positive curvature?
\end{problem}

The $\mathsf F_4$-homogeneous metrics on $W^{24}$ depend on three positive numbers $x_1$, $x_2$ and $x_3$, corresponding to the scalings of the bi-invariant metric $Q$ on each of the three $\Spin(8)$-irreducible subspaces $\m=V_1\oplus V_2\oplus V_3$, where $\mathfrak f_4=\mathfrak{so}(8)\oplus\m$ is a $Q$-orthogonal splitting. The subspaces $V_j$ are $8$-dimensional and correspond to the three spinor representations $\R^8$, $\Delta_8^+$ and $\Delta_8^-$. If two of the three parameters $x_1,x_2,x_3$ are equal, then the corresponding homogeneous metric is such that the homogeneous fibration $S^8(\tfrac12)\to W^{24}\to\Ca P^2$, corresponding to the groups $\Spin(8)\subset\Spin(9)\subset \mathsf F_4$, is a Riemannian submersion, cf. \eqref{eq:homfib}. Thus, Theorem~\ref{thm:submersions} implies that \emph{such metrics do not have strongly positive curvature}. Nevertheless, there are clearly other homogeneous metrics on $W^{24}$ with $\sec>0$, which could possibly have strongly positive curvature.\footnote{A computation shows that there are three linearly independent $\mathsf F_4$-invariant $4$-forms in $W^{24}$, one in each of $\Lambda^2 V_i\otimes\Lambda^2 V_j$, $1\leq i<j\leq 3$.}

\begin{problem}\label{prob:w24}
Are there any $\mathsf F_4$-invariant metrics on $W^{24}$ with strongly positive curvature?
\end{problem}

An answer to Problem~\ref{prob:w24} would complete the classification of closed simply-connected homogeneous spaces with strongly positive curvature, initiated in the present paper. This approach uses the classification of closed simply-connected homogeneous spaces with $\sec>0$ (Theorem~\ref{thm:homsp}), which makes it natural to ask, in connection to Problem~\ref{prob:obstructions}:

\begin{problem}\label{prob:biquotients}
Is there a way to classify closed simply-connected homogeneous spaces with strongly positive curvature that is independent of Theorem~\ref{thm:homsp}?
\end{problem}

Apart from homogeneous spaces, other examples of closed manifolds with $\sec>0$ are given by \emph{biquotients}. A biquotient $\G/\!/\H$ is the orbit space of a free isometric action of a Lie group $\H\subset\G\times \G$ on a compact Lie group $\G$, given by $(h_1,h_2)\cdot g=h_1gh_2^{-1}$. The quotient map $\G\to\G/\!/\H$ is a Riemannian submersion, and hence formula \eqref{eq:oneill} can be applied to compute the curvature operator of biquotients. In particular, it can be used on the known examples of biquotients with $\sec>0$: the Eschenburg spaces $E^6$ and $E^7_{k,\ell}$ and the Bazaikin spaces $B^{13}_q$, which are respectively generalizations of $W^6$, $W^7_{k,\ell}$ and $B^{13}$, see \cite{zillersurvey}. This leads to our next:

\begin{problem}
Do the biquotients $E^6$, $E^7_{k,\ell}$ and $B^{13}_q$ have strongly positive curvature?
\end{problem}

Homogeneous spaces and biquotients aside, the only other currently known example\footnote{Apart from the proposed positively curved exotic sphere of Petersen and Wilhelm \cite{pw}.} of closed manifold with $\sec>0$ is a cohomogeneity one manifold. A manifold $(M,\g)$ is said to have \emph{cohomogeneity one} if it admits an isometric action by a Lie group $\G$ such that the orbit space $M/\G$ is one-dimensional. Grove, Wilking and Ziller~\cite{gwz} performed a systematic study of simply-connected closed cohomogeneity one manifolds with $\sec>0$, which lead to a classification result with an infinite family of \emph{candidate} manifolds. More precisely, they found two infinite families $P_k^7$, $Q_k^7$, $k\geq2$, and an exceptional case $R^7$, of $7$-dimensional manifolds which are the only manifolds different from homogeneous spaces and biquotients that could carry a cohomogeneity one metric with $\sec>0$.\footnote{Recently, Verdiani and Ziller~\cite{vzR} showed that $R^7$ does not admit an invariant metric with $\sec>0$.} Shortly after, Grove, Verdiani and Ziller~\cite{p2} constructed an invariant metric \emph{with strongly positive curvature} on the candidate $P^7_2$, which was also identified as an exotic $T_1S^4$. Dearricott~\cite{de} has independently found a metric with $\sec>0$ on this manifold, using modified curvature operators in an indirect way. These recent developments in cohomogeneity one suggest the following:

\begin{problem}
Classify closed simply-connected cohomogeneity one manifolds with strongly positive curvature (possibly independently of the $\sec>0$ classification by Grove, Wilking and Ziller~\cite{gwz}). In particular, can the remaining candidates $P^7_k$, $k\geq3$, and $Q^7_k$, $k\geq2$, support invariant metrics with strongly positive curvature?
\end{problem}

\subsection{``Best" modified curvature operators}
Given an algebraic curvature operator $R\colon\Lambda^2 V\to\Lambda^2 V$ with strongly positive curvature, there are many $4$-forms $\omega$ such that $R+\omega$ is positive-definite. Since, in general, there is no canonical choice $\omega_R$ to modify a particular $R$, we are led to the following intentionally vague:

\begin{problem}
Is there a ``best" $\omega_R\in\Lambda^4 V$ such that $R+\omega_R$ is positive-definite?
\end{problem}

The ambiguity in the notion of ``best" allows for many interpretations, centered around $\omega_R$ capturing the most information about $R$, or being determined in a canonical way $R\mapsto\omega_R$. Since the set of $\omega$'s such that $R+\omega$ is positive-definite is bounded and convex, it has a \emph{center of mass} $\omega_\text{CM}$, which is a natural candidate for ``best" $\omega$. In particular, notice that it varies continuously with $R$, by the Dominated Convergence Theorem.

An important application of such a canonical association $R\mapsto\omega_R$ would be that Sylvester's criterion applied to $R+\omega_R$ would yield a \emph{quantifier-free} description of strongly positive curvature. In other words, it would give a finite number of conditions on the entries of an algebraic curvature operator $R$ for it to have strongly positive curvature. It should be noted that, independently of the existence of a canonical association $R\mapsto\omega_R$, the set of algebraic curvature operators with strongly positive curvature is semi-algebraic by the Tarski-Seidenberg theorem, and hence there exists a finite decision procedure to determine whether a given curvature operator has strongly positive curvature, see \cite{mendes,weinstein}.

\subsection{Strongly nonnegative curvature}
Analogously to strongly positive curvature, we say that an algebraic curvature operator $R\colon\Lambda^2V\to\Lambda^2V$ has strongly \emph{nonnegative} curvature if there exists $\omega\in\Lambda^4V$ such that $R+\omega$ is positive-\emph{semidefinite}. From \eqref{eq:secrw}, if $R$ has strongly nonnegative curvature, then $\sec_R\geq0$, and the corresponding notion for manifolds is established in the same pointwise manner as strongly positive curvature (recall Definition~\ref{def:strongpos}). Most results in Section~\ref{sec:basics} are automatically valid for strongly nonnegative curvature, in particular, Riemannian submersions and Cheeger deformations preserve this condition.

However, note that the openness argument in Remark~\ref{rem:smoothness} does not apply in the case of strongly nonnegative curvature, which leads us to:

\begin{problem}\label{prob:nonneg}
Let $(M,\g)$ be a smooth manifold with strongly nonnegative curvature. Does there exist a \emph{smooth} $\omega\in\Omega^4(M)$ such that $R+\omega$ is positive semi-definite?
\end{problem}

In the special case of $\dim V=4$, a direct application of Proposition~\ref{prop:dim4} and \cite[Thm 2.1]{thorpeJDG} proves the following:

\begin{proposition}\label{prop:dim4nonneg}
Let $R\colon\Lambda^2V\to\Lambda^2V$ be an algebraic curvature operator, with $\dim V=4$. Assume that $\sec_R\geq0$ and that there exists $\sigma\in\Gr(V)$ such that $\sec_R(\sigma)=0$. Then there exists a \emph{unique} $\omega\in\Lambda^4 V$ such that $R+\omega$ is positive-semidefinite.
\end{proposition}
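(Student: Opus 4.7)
The plan is to treat existence and uniqueness separately. Existence is essentially contained in Proposition~\ref{prop:dim4} together with \cite[Thm 2.1]{thorpeJDG}, while the uniqueness assertion is the main new content, and rests crucially on dimension four.

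For existence I would either cite \cite[Thm 2.1]{thorpeJDG} directly, or argue by continuity from Proposition~\ref{prop:dim4}: for each $\varepsilon>0$ the operator $R+\varepsilon\,\id$ has $\sec>0$, so by Proposition~\ref{prop:dim4} there exists $\omega_\varepsilon\in\Lambda^4 V$ with $R+\varepsilon\,\id+\omega_\varepsilon$ positive-definite. Since $\dim\Lambda^4V=1$ and the $\omega_\varepsilon$ remain in a bounded subset of $\Lambda^4 V$ (their size being controlled by the trace), a subsequential limit $\omega_\varepsilon\to\omega$ yields an $\omega$ with $R+\omega$ positive-semidefinite.

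For uniqueness, suppose $\omega_1,\omega_2\in\Lambda^4 V$ both make $R+\omega_i$ positive-semidefinite, and let $\sigma\in\Gr(V)$ satisfy $\sec_R(\sigma)=0$. Since $\sigma$ is decomposable one has $\sigma\wedge\sigma=0$, hence $\langle\omega_i(\sigma),\sigma\rangle=0$, and consequently $\langle(R+\omega_i)(\sigma),\sigma\rangle=\sec_R(\sigma)=0$. The standard Cauchy--Schwarz argument for positive-semidefinite self-adjoint operators (applied to the form $\langle(R+\omega_i)\cdot,\cdot\rangle$) then forces $\sigma\in\ker(R+\omega_i)$, so subtracting gives $(\omega_1-\omega_2)(\sigma)=0$.

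The decisive final step, and the place where dimension four is essential, is the observation that $\Lambda^4 V$ is one-dimensional, spanned by the volume form $\operatorname{vol}$, and that the self-adjoint operator on $\Lambda^2 V$ that $\operatorname{vol}$ induces via \eqref{eq:wrw} coincides with the Hodge star $\ast\colon\Lambda^2 V\to\Lambda^2 V$, which is an isomorphism (indeed $\ast^2=\id$). Writing $\omega_1-\omega_2=t\cdot\operatorname{vol}$, the relation $t\cdot\ast\sigma=0$ together with $\sigma\neq 0$ (so $\ast\sigma\neq 0$) forces $t=0$, hence $\omega_1=\omega_2$. I do not anticipate any real obstacle beyond this last identification of the operator $\operatorname{vol}$ with $\ast$, which is a direct computation from \eqref{eq:wrw} in an orthonormal basis.
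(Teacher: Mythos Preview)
Your argument is correct and matches the paper's approach, which simply invokes Proposition~\ref{prop:dim4} and \cite[Thm 2.1]{thorpeJDG} without further detail; you have effectively unpacked what lies behind that citation, and your uniqueness step via the identification of the operator induced by $\operatorname{vol}$ with the Hodge star is exactly the right idea. One small correction in your existence sketch: the trace of $\ast$ on $\Lambda^2 V$ vanishes (eigenvalues $\pm 1$ with equal multiplicity), so ``controlled by the trace'' does not quite work; instead, writing $\omega_\varepsilon=t_\varepsilon\operatorname{vol}$ and evaluating $\langle(R+\varepsilon\,\id+t_\varepsilon\ast)v,v\rangle>0$ on unit self-dual and anti-self-dual $v$ yields the required two-sided bound on $t_\varepsilon$.
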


By the above, if $(M^4,\g)$ is a smooth manifold with strongly nonnegative curvature, then any $\omega$ such that $R+\omega$ is positive-semidefinite is completely determined on the subset $\mathcal Z=\{p\in M:\sec(\sigma)=0 \text{ for some } \sigma\subset T_pM\}$. In this situation, Problem \ref{prob:nonneg} is related to whether such $\omega\in\Omega^4(\mathcal Z)$ is smooth. Since any $\omega\in\Omega^4(M)$ is of the form $\omega=f\,\vol_M$, smoothness of $\omega$ is the same as smoothness of $f\colon \mathcal Z\to\R$.

It should be noted that, by the proof of Theorem~\ref{thm:submersions}, if $\pi\colon\overline M\to M$ is a Riemannian submersion and $\overline M$ has strongly nonnegative curvature with a \emph{smooth} modifying $4$-form, then the same holds for $M$. For instance, consider the $S^1$-action on $S^3\times S^2$ given by the Hopf action on $S^3$ and rotation on $S^2$. The quotient map $\pi\colon S^3\times S^2\to\C P^2\#\overline{\C P}^2$ is a Riemannian submersion, where the base $M^4=\C P^2\#\overline{\C P}^2$ has a cohomogeneity one metric with $\sec\geq0$, and \emph{every point} supports a plane with zero curvature, i.e., $\mathcal Z=M$, see \cite[p. 12]{bettiol}. In this case, the unique $4$-form $\omega\in\Omega^4(M)$ such that $R+\omega$ is positive-semidefinite is smooth, and can be computed as $\omega=r(2-r^2)^{-3/2}\vol_M$, where $r(p)\in [-1,1]$ is the height of the $S^1$-orbit $\pi^{-1}(p)\subset S^2\subset\R^3$.

Finally, note that if $\dim V\geq5$ and $R\colon\Lambda^2V\to\Lambda^2V$ is an algebraic curvature operator with strong nonnegative curvature (but without strongly positive curvature), then the uniqueness of $\omega\in\Lambda^4 V$ such that $R+\omega$ is positive-semidefinite may fail. For instance, the curvature operator of $S^4\times S^1$ can be modified with any sufficiently small multiple of the volume form of $S^4$, remaining positive-semidefinite.

\appendix
\section{Berger spheres}
\label{sec:berger}

The so-called \emph{Berger metrics} on spheres give an interesting example that illustrates how strongly positive curvature is related to positive-definite curvature operator and $\sec>0$. These are metrics on the total space of the Hopf bundles
\begin{equation*}
S^1\longrightarrow S^{2n+1}\longrightarrow \C P^n \quad \mbox{ and }\quad S^3\longrightarrow S^{4n+3}\longrightarrow \Hr P^n,
\end{equation*}
obtained by scaling the round metric along vertical directions, that is, metrics of the form $\mathbf g_\lambda=\lambda\,\mathbf g|_\text{ver}+\mathbf g|_\text{hor}$, where $\mathbf g_1=\mathbf g|_\text{ver}+\mathbf g|_\text{hor}$ is the round metric. Recall that $\mathbf g_\lambda$ has $\sec>0$ if and only if $0<\lambda<\tfrac43$, see \cite{vz}. Note also that the metrics $\mathbf g_\lambda$, $0<\lambda<1$, are Cheeger deformations of the round metric and hence have strongly positive curvature by Theorem~\ref{thm:cheegerdef}.

Let us consider the case of $S^3\to S^7\to \Hr P^1$. By a direct computation, its curvature operator is positive-definite if and only if $\tfrac12<\lambda<\lambda_1$, where $\lambda_1\cong 1.202$ is the only real root of $p_1(\lambda)=8\lambda^3-16\lambda^2+11\lambda-4$. Furthermore, it has strongly positive curvature if and only if $0<\lambda<\lambda_2$, where $\lambda_2\cong 1.304$ is the only real root of $p_2(\lambda)=25\lambda^3-60\lambda^2+48\lambda-16$.
Therefore, the proper inclusions $(\tfrac12,\lambda_1)\subsetneq (0,\lambda_2)\subsetneq (0,\tfrac43)$ represent proper inclusions of the classes of metrics $\mathbf g_\lambda$ on $S^7$ with, respectively, $R>0$, strongly positive curvature and $\sec>0$.

Note that $S^7$ admits a totally geodesic embedding into $S^{4n+3}$, where both are equipped with the Berger metric $\mathbf g_\lambda$. Thus, Proposition~\ref{prop:immersions} implies that $(S^{4n+3},\mathbf g_\lambda)$, for all $n\geq1$ and $\lambda_2<\lambda<\tfrac43$, are examples of closed manifolds with $\sec>0$ that do not have strongly positive curvature, see Remarks~\ref{rem:zoltek} and \ref{rem:b13}.
%


\begin{thebibliography}{99}
\bibitem{aw}{{\sc S. Aloff \and N. Wallach}, \emph{An infinite family of distinct 7-manifolds admitting positively curved Riemannian structures.} 
Bull. Amer. Math. Soc. 81 (1975), 93--97.}
\bibitem{bb}{{\sc L. B\'erard-Bergery}, \emph{Les vari\'et\'es riemanniennes homog\`enes simplement connexes de dimension impaire \`a courbure strictement positive}. J. Math. Pures Appl. 55 (1976), 47--67.}
\bibitem{berger}{{\sc M. Berger}, \emph{Les vari\'et\'es riemanniennes homog\`enes normales simplement connexes \`a courbure strictement positive.} Ann. Scuola Norm. Sup. Pisa (3) 15 1961 179--246.}
\bibitem{besse}{{\sc A. Besse}, \emph{Einstein manifolds}. Reprint of the 1987 edition. Classics in Mathematics. Springer-Verlag, Berlin, 2008.}
\bibitem{bettiol}{{\sc R. G. Bettiol}, \emph{Positive biorthogonal curvature on $S^2\times S^2$}. arXiv:1210.0043, to appear in Proc. Amer. Math. Soc.}
\bibitem{bw-ricci}{{\sc C. B\"ohm \and B. Wilking}, \emph{Nonnegatively curved manifolds with finite fundamental groups admit metrics with positive Ricci curvature}. Geom. Funct. Anal. 17 (2007), no. 3, 665--681.}
\bibitem{bw}{{\sc C. B\"ohm \and B. Wilking}, \emph{Manifolds with positive curvature operators are space forms}. Ann. of Math. 167 (2008), 1079--1097.}
\bibitem{browder}{{\sc W. Browder}, \emph{Higher torsion in $H$-spaces.} Trans. Amer. Math. Soc. 108 (1963) 353--375.}
\bibitem{cheeger}{{\sc J. Cheeger}, \emph{Some examples of manifolds of nonnegative curvature}, J. Diff. Geom. 8 (1973), 623--628.}
\bibitem{chern}{{\sc S. S. Chern}, \emph{On curvature and characteristic classes of a Riemann manifold.} Abh. Math. Sem. Univ. Hamburg 20 (1955), 117--126.}
\bibitem{de}{{\sc O. Dearricott}, \emph{A $7$-manifold with positive curvature}. Duke Math. J. 158 (2011), no. 2, 307--346.}
\bibitem{es}{{\sc J.-H. Eschenburg}, \emph{Freie isometrische Aktionen auf kompakten Lie-Gruppen mit positiv gekr\"ummten Orbitr\"aumen.} Schriftenr.\ Math.\ Inst.\ Univ.\ M\"unster, 2. Serie, 32 (1984).}
\bibitem{ger}{{\sc R. Geroch}, \emph{Positive sectional curvatures does not imply positive Gauss-Bonnet integrand.} Proc. Amer. Math. Soc. 54 (1976), 267--270.}
\bibitem{grove-survey}{{\sc K. Grove}, \emph{Geometry of, and via, symmetries}, Conformal, Riemannian and Lagrangian geometry 31-53, Univ. Lecture Ser., 27, Amer. Math. Soc., Providence, RI, 2002.}
\bibitem{p2}{{\sc K. Grove, L. Verdiani \and W. Ziller}, \emph{An exotic $T_1S^4$ with positive curvature}. Geom. Funct. Anal. 21 (2011), no. 3, 499--524.}
\bibitem{gwz}{{\sc K. Grove, B. Wilking \and W. Ziller}, \emph{Positively curved cohomogeneity one manifolds and $3$-Sasakian geometry.} J. Differential Geom. 78 (2008), no. 1, 33--111.}
\bibitem{gzricci}{{\sc K. Grove \and W. Ziller}, \emph{Cohomogeneity one manifolds with positive Ricci curvature.} Invent. Math. 149 (2002), no. 3, 619--646.}
\bibitem{helgason}{{\sc S. Helgason}, \emph{Differential geometry, Lie groups, and symmetric spaces.} Pure and Applied Mathematics, 80. Academic Press, Inc., New York-London, 1978.}
\bibitem{jaco}{{\sc H. Jacobowitz}, \emph{Curvature operators on the exterior algebra}. Linear and Multilinear Algebra 7 (1979), no. 2, 93--105.}
\bibitem{klem}{{\sc P. Klembeck}, \emph{On Geroch's counterexample to the algebraic Hopf conjecture.} Proc. Amer. Math. Soc. 59 (1976), no. 2, 334--336.}
\bibitem{kulk}{{\sc R. Kulkarni}, \emph{On the Bianchi Identities.} Math. Ann. 199 (1972), 175--204.}
\bibitem{mendes}{{\sc R. Mendes}, \emph{Curvature conditions as semi-algebraic sets}, to appear.}
\bibitem{mueter}{{\sc M. M\"uter}, \emph{Kr\"ummungserh\"ohende Deformationen mittels Gruppenaktionen}. PhD thesis, University of M\"unster, 1987.}
\bibitem{pw}{{\sc P. Petersen \and F. Wilhelm}, \emph{An exotic sphere with positive sectional curvature}, arXiv:0805.0812.}
\bibitem{puttmann}{{\sc T. P\"uttmann}, \emph{Optimal pinching constants of odd-dimensional homogeneous spaces}. Invent. Math. 138 (1999), 631--684.}
\bibitem{puttmann2}{{\sc T. P\"uttmann}, Private communication. 2013.}
\bibitem{singerthorpe}{{\sc I. M. Singer \and J. A. Thorpe}, \emph{The curvature of 4-dimensional Einstein spaces}. 1969 Global Analysis (Papers in Honor of K. Kodaira) pp. 355--365 Univ. Tokyo Press, Tokyo}
\bibitem{thorpeJDG}{{\sc J. A. Thorpe}, \emph{The zeros of nonnegative curvature operators}. J. Differential Geometry 5 (1971) 113--125; Erratum,  J. Differential Geometry 11 (1976), 315.}
\bibitem{thorpe}{{\sc J. A. Thorpe}, \emph{On the curvature tensor of a positively curved 4-manifold}. Proceedings of the Thirteenth Biennial Seminar of the Canadian Mathematical Congress (Dalhousie Univ., Halifax, N.S., 1971), Vol. 2, pp. 156–159. Canad. Math. Congr., Montreal, Que., 1972.}
\bibitem{vz}{{\sc L. Verdiani \and W. Ziller}, \emph{Positively curved homogeneous metrics on spheres}. Math. Z. 261 (2009), no. 3, 473--488.}
\bibitem{vzR}{{\sc L. Verdiani \and W. Ziller}, \emph{Concavity and rigidity in non-negative curvature}, arXiv:1012.2265, to appear in J. Diff. Geom.}
\bibitem{wa}{{\sc N. R. Wallach}, \emph{Compact homogeneous Riemannian manifolds with strictly positive curvature}. Ann. of Math. 96 (1972), 277--295.}
\bibitem{weinstein}{{\sc A. Weinstein}, \emph{Remarks on curvature and the Euler integrand.} J. Differential Geometry 6 (1971/72), 259--262.}
\bibitem{weinstein2}{{\sc A. Weinstein}, \emph{Fat bundles and symplectic manifolds.} Adv. in Math. 37 (1980), no. 3, 239--250.}
\bibitem{ziller}{{\sc W. Ziller}, \emph{Homogeneous Einstein metrics on spheres and projective spaces}. Math. Ann. 259 (1982), no. 3, 351--358.}
\bibitem{zillersurvey}{{\sc W. Ziller}, \emph{Examples of Riemannian manifolds with non-negative sectional curvature}. Surveys in differential geometry. Vol. XI, 63-102, Surv. Differ. Geom., 11, Int. Press, Somerville, MA, 2007.}
\bibitem{zillermueter}{{\sc W. Ziller}, \emph{On M. Mueter's Ph.D. Thesis on Cheeger deformations}. arXiv:0909.0161.}
\bibitem{fatness}{{\sc W. Ziller}, \emph{Fatness revisited}. Notes, \url{www.math.upenn.edu/~wziller/papers/Fat-09.pdf}.}
\bibitem{zoltek}{{\sc S. Zoltek}, \emph{Nonnegative curvature operators: some nontrivial examples}. J. Diff. Geom. 14 (1979), 303--315.}
\end{thebibliography}
\end{document}